\newtheorem{thm}{Theorem}[section]
\theoremstyle{definition}
\theoremstyle{remark}
\newtheorem{rem}[thm]{Remark}
\numberwithin{equation}{section}
\newtheorem{remark}{Remark}[section]
\newtheorem*{maintheorem*}{Main Theorem}
\numberwithin{equation}{section}
\newcommand{\be} {\begin{equation}}
\newcommand{\ee} {\end{equation}}
\newcommand{\sbe} {\begin{subequations}}
\newcommand{\sen} {\end{subequations}}
\newcommand{\mcal}[1]{\mathcal{#1}}
\newcommand{\pa} {\partial}
\newcommand{\al} {\alpha}
\newcommand{\ga} {\gamma}
\newcommand{\ka} {\kappa}
\newcommand{\na} {\nabla}
\newcommand{\la} {\lambda}
\newcommand{\eps} {\varepsilon}
\newcommand{\R}{\mathbb{R}}
\newcommand{\Li}{\mathcal{L}}
\newcommand{\Dx}{\Delta x}
\newcommand{\Dy}{\Delta y}
\newcommand{\Dt}{\Delta t}
\newcommand{\z}{{\bf z}}
\newcommand{\q}{{\bf q}}
\newcommand{\uc}{{\bf u}}
\newcommand{\U}{{\bf U}}
\newcommand{\D}{{\bf D}}
\newcommand{\Po}{{\bf P}}
\newcommand{\W}{{\bf W}}
\newcommand{\F}{{\bf F}}
\newcommand{\sou}{{\bf s}}
\newcommand{\G}{{\bf G}}
\newcommand{\Q}{{\bf Q}}
\newcommand{\Hz}{{\bf H}}
\newcommand{\I}{{\bf I}}
\newcommand{\f}{{\bf f}}
\newcommand{\V}{{\bf V}}
\newcommand{\rhoa}{\rho_{\al}}
\newcommand{\va}{\mathbf{v}_{\al}}
\newcommand{\Ea}{E_{\al}}
\newcommand{\rhoi}{\rho_i}
\newcommand{\vi}{\mathbf{v}_i}
\newcommand{\Ei}{E_i}
\newcommand{\rhoe}{\rho_e}
\newcommand{\ve}{\mathbf{v}_e}
\newcommand{\Ee}{E_e}
\newcommand{\ea}{e_{\al}}
\newcommand{\eel}{e_e}
\newcommand{\emax}{e_m}
\newcommand{\ei}{e_i}
\newcommand{\qa}{{\bf q}_\al}
\newcommand{\E}{\mathbf{E}}
\newcommand{\B}{\mathbf{B}}
\newcommand{\rg}{\hat{r}_g}
\newcommand{\debye}{\hat{\la}_d}
\newcommand{\ch}{\hat{c}}
\newcommand{\aie}{\al\in\{i,e\}}
\newcommand{\ipmh}{{i\pm1/2}}
\newcommand{\iph}{{i+1/2}}
\newcommand{\jph}{{j+1/2}}
\newcommand{\imh}{{i-1/2}}
\newcommand{\jmh}{{j-1/2}}
\newcommand{\Unij}{\U_{i,j}^n}
\newcommand{\Uij}{\U_{i,j}}
\newcommand{\Sij}{{\bf S}_{i,j}}
\newcommand{\Sn}{{\bf S}}
\title{Entropy Stable Numerical Schemes for Two-Fluid plasma equations}
\author[Harish Kumar]{Harish Kumar}
\address[Harish Kumar] {\newline Seminar for Applied Mathematics (SAM) \newline
 Department of Mathematics, ETH Z\"urich, \newline Z\"urich, Switzerland}
\email[]{harish@math.ethz.ch}
\author[Siddhartha Mishra]{Siddhartha Mishra}
\address[Siddhartha Mishra]{\newline Seminar for Applied Mathematics (SAM) \newline
 Department of Mathematics, ETH Z\"urich, \newline Z\"urich, Switzerland}
\email[]{siddhartha.mishra@sam.math.ethz.ch}
\date{}                                           
\begin{document}
\maketitle
\begin{abstract}
Two-fluid ideal plasma equations are a generalized form of the ideal MHD equations in which electrons and ions are considered as separate species. The design of efficient numerical schemes for the these equations is complicated on account of their non-linear nature and the presence of stiff source terms, especially for high charge to mass ratios and for low Larmor radii. In this article, we design entropy stable finite difference schemes for the two-fluid equations by combining entropy conservative fluxes and suitable numerical diffusion operators. Furthermore, to overcome the time step restrictions imposed by the stiff source terms, we devise time-stepping routines based on implicit-explicit (IMEX)-Runge Kutta (RK) schemes. The special structure of the two-fluid plasma equations is exploited by us to design IMEX schemes in which only local (in each cell) linear equations need to be solved at each time step. Benchmark numerical experiments are presented to illustrate the robustness and accuracy of these schemes.
\end{abstract}
%---------------------------------------------

\section{Introduction}
\label{sec:intro}
An ensemble of plasma consists of ions, electrons and neutral particles. These particles interact through both short range (e.g.  collisions) and long range ( e.g. electromagnetic) forces. Plasmas are increasingly used in spacecraft propulsion, controlled nuclear fusion and in circuit breakers in the electrical power industry. 

Under the assumption of {\it quasi-neutrality} ( i.e. charge density difference between ions and electrons is neglected), the flow of plasmas is modeled by the ideal MHD equations (see \cite{book:goed}). Although, the ideal MHD equations have been successfully employed in modeling and simulating plasma flows, this model is derived by  ignoring the Hall effect and treating plasma flows as {\it single} fluid flows. These effects are very important for many applications, e.g. space plasmas, Hall current thrusters, field reversal configurations for magnetic plasma confinement and for fast magnetic reconnection.

In this article, we consider the more general ideal two-fluid model (see \cite{shumlak03},\cite{loverich},\cite{hakim06}) for collisionless plasmas. In the ideal two-fluid equations, electrons and ions are treated as different fluids by allowing them to posses different velocities and temperatures.  Assuming  {\it local thermodynamical equilibrium}, we write the two-fluid equations as a system of balance laws (see \cite{hakim06}):
\begin{equation}
\label{eq:bal_law}
\pa_t \uc +\mbox{div}\; \f(\uc) =\sou(\uc), \quad({\bf x},t) \in \R^3 \times (0,\infty).
\end{equation}
Here, $\uc = \uc(x,y,z,t)$ is the vector of non-dimensional conservative variables,
\be
\label{eq:u}
\uc=\left\{\rhoi,\rhoi\vi, \Ei, \rhoe,\rhoe\ve,\Ee,\B,\E,\phi,\psi \right\}^{\top}.
\ee
Here, the subscripts $\{i,e\}$ refer to the ion and electron species respectively, $\rho_{\{i,e\}}$ are the densities, $\mathbf{v}_{\{i,e\}}=(v_{\{i,e\}}^x,v_{\{i,e\}}^y,v_{\{i,e\}}^z)$ are the velocities, $E_{\{i,e\}}$ are the total energies, $\B=(B^x,B^y,B^z)$ is the magnetic field, $\E=(E^x,E^y,E^z)$ is the electric field and $\phi,\psi$ are the potentials. The flux vector $\f=(\f^x,\f^y,\f^z)$ can be written as,
\be
\label{eq:flux}
\f(\uc) =\left\{\begin{array}{c}
\f_i(\uc_i)\\
\f_e(\uc_e)\\
\f_m(\uc_m)
\end{array}\right\},\quad
\mbox{ where  }\quad
\f_\al(\uc_\al) =\left\{\begin{array}{c}
\rhoa \va\\
\rhoi \va\va^{\top} + p_\al\I \\
(\Ea+p_\al)\va\\
\end{array}\right\},
\ee
with $\aie$, and
\be
\label{eq:fs}
\f_m(\uc_m) =\left\{\begin{array}{c}
\mathcal{T}(\E)+\ka \psi\I \\
-\ch^2\mathcal{T}(\B) +\xi\ch^2\phi\I\\
\xi \E\\
\ka\ch^2\B
\end{array}\right\},\;\; 
\mbox{where}\;\;
\mathcal{T}({\bf K})=
\left[\begin{array}{ccc}0 & K^z & -K^y \\ -K^z& 0 & K^x \\K^y & -K^x & 0\end{array}\right],
\ee
for any vector ${\bf K}=(K^x,K^y,K^z).$ Here $\uc_\al=\{\rhoa,\rhoa\va,\Ea\}^{\top},\;\;\aie$, $\uc_m=\{\B,\E,\phi,\psi\}^{\top}$, $p_{\{i,e\}}$ are the pressures, $\xi,\kappa$ are penalizing speeds (see \cite{munz0}) and $\ch={c}/{v_i^T}$ is the normalized speed of light. Also, $v_i^T$ is the reference thermal velocity of ion. Writing the flux in component form (see \eqref{eq:flux},\eqref{eq:fs}), we observe that the first two components of the flux, $\f_\al(\uc_\al),\;\;\aie,$ are the nonlinear ion and electron Euler fluxes and the third component is the linear Maxwell flux. So, the homogeneous part of  \eqref{eq:bal_law} is hyperbolic.

The source term $\sou$ is given by,
\be
\sou(\uc) =\left\{\begin{array}{c}
0\\
\frac{1}{\rg}\rhoi(\E+\vi\times\B)\\
\frac{1}{\rg}\rhoi(\E\cdot\vi)\\
0\\
-\frac{\la_m}{\rg}\rhoe(\E+\ve\times\B)\\
-\frac{\la_m}{\rg}\rhoe(\E\cdot\ve)\\
{\bf 0}\\
-\frac{1}{\debye^2\rg}(r_i\rhoi \vi+ r_e\rhoe \ve)\\
\frac{\xi}{\debye^2\rg}(r_i\rhoi + r_e \rhoe)\\
0
\end{array}\right\}
\label{eq:flux_source},
\ee
with the charge to mass ratios $r_\al=q_\al/m_\al,\;\;\aie$ and the ion-electron mass ratio $\la_m={m_i}/{m_e}$. Two physically significant parameters appear in the source term namely, the normalized Larmor radius $\rg=\frac{r_g}{x_0}=\frac{m_iv_i^T}{q_iB_0 x_0}$ and the ion Debye length (normalized with respect to the Larmor radius) $\debye={\la_d}/{r_g}=\sqrt{\eps_0 v_i^{T ^2} / n_0 q_i}/r_g$ . Here,  $B_0$ is the reference magnetic field, $\eps_0$ is the permittivity of free space and $x_0$ is the reference length. The ion mass $m_i$ and ion charge $q_i$ are assumed to be 1. In addition, we assume that both the ion and the electron satisfies the ideal gas law:
\be
\label{eq:gas_law}
E_{\al}= \frac{p_{\al}}{\ga -1} + \frac{1}{2}\rhoa |\va|^2, \quad \aie,
\ee
with gas constant $\gamma=5/3$. In the above equations, we use the perfectly hyperbolic form of the Maxwell equation (see \cite{munz0}), which represent the evolution of magnetic field $\B$ and electric field $\E$.

The design of numerical schemes for systems of balance laws has undergone rapid development in the last two decades, see \cite{book:lev} for a detailed description of efficient schemes. The standard paradigm involves the use of finite volume (conservative finite difference) schemes in which the solution is evolved in terms of (approximate) solutions of Riemann problems at cell interfaces. Higher order accuracy in space is obtained by non-oscillatory interpolation procedures of the TVD, ENO and WENO types. An alternative is to use the Discontinuous Galerkin (DG) approach. High-order temporal accuracy results from strong stability preserving (SSP) Runge-Kutta (RK) methods. Source terms are included by using operator splitting approaches. 

Although the two-fluid equations are a system of balance laws, standard discretization techniques may fail to provide a robust approximation. Two major difficulties are present in the numerical analysis of the two-fluid equations: 1) the design of suitable numerical fluxes and 2) treatment of the source term that becomes increasingly stiffer as more realistic charge to mass ratios or more realistic Larmor radii (Debye lengths) are considered. 

Given the above challenges, very few robust numerical schemes exist for the two-fluid equations.   In \cite{shumlak03}, the authors derive a Roe-type Riemann solver. Time updates are performed by treating the stiff source term implicitly and the flux terms explicitly. The resulting non-linear equations are solved using Newton iterations. This method might be diffusive and may require many iterations for each time step. In \cite{hakim06}, the authors propose a wave propagation method (see \cite{book:lev}) for the spatial discretization. For time updates, a second -order operator splitting approach is used. A similar approach is taken in \cite{loverich,john}, where spatial discretization is based on discontinuous Galerkin (DG) methods and time update is based on SSP-RK methods. Both of these approaches are easy to implement but can be computationally expensive, especially for realistic charge to mass ratios.

Given the state of the art, we propose numerical schemes for the two-fluid equations with the following \emph{novel} features:
\begin{itemize}
\item First, we design entropy stable finite difference discretizations of the two fluid equations. The basis of our design is to ensure entropy stability as the two fluid equations satisfy an entropy inequality at the continuous level. We use the approach of \cite{tadmor1987} by constructing entropy conservative fluxes and suitable numerical diffusion operators to ensure entropy stability. Second-order entropy stable schemes are constructed following the framework of a recent paper \cite{ulrik}.

\item We discretize the source term in the two-fluid equations by an IMEX approach: the flux terms are discretized explicitly whereas the source term is discretized implicitly. The main feature of our schemes is their ability to use the special structure of the two-fluid equations that allows us to design IMEX schemes requiring the solution of only local (in each cell) linear equations at every time step. This is in contrast to the schemes proposed in \cite{shumlak03} that required the solution of non-linear iterations. The local equations that result from our approach can be solved exactly making our schemes computationally inexpensive. 
\end{itemize}

The rest of this article is organized as follows: In the following Section \ref{sec:cont}, we obtain an entropy estimate for the ideal two-fluid eqns. \eqref{eq:bal_law}. This result at the continuous level is then used to design an entropy stable finite difference scheme in Section \ref{sec:semi}. In Section \ref{sec:time}, we present IMEX-RK schemes for the temporal discretization. The resulting, algebraic system of equations is then solved exactly. In Section \ref{sec:num}, we simulate the nonlinear soliton propagation in the two-fluid plasma and a stiff Riemann problem to demonstrate the robustness and efficiency of these schemes.

%-----------------------------------------------------
\section{Analysis of Continuous Problem}
\label{sec:cont}
It is well known that solutions of \eqref{eq:bal_law} consists of discontinuities, even for smooth initial data. Hence, we need to consider the solutions of \eqref{eq:bal_law} in the weak sense. However, uniqueness of the solutions is still not guaranteed and we need to supplement  \eqref{eq:bal_law} with additional admissibility criteria to obtain a physically meaningful solution. This gives rise to concept of entropy. The standard thermodynamic entropies for ion and electron Euler flows are,
\be
\ea=-\frac{\rhoa s_\al}{\ga-1} \quad {\rm with } \;\;s_\al = \log(p_\al) - \gamma \log(\rhoa), \quad \aie.
\label{eq:ent_flow}
\ee
For the electromagnetic part we consider the {\it quadratic} entropy i.e electromagnetic energy, 
 \be
 e_m(\uc_m)=\frac{|\B|^2 + \phi^2}{2} + \frac{|\E|^2 + \psi^2}{2\ch^2}.
 \label{eq:ent_max}
 \ee 
We obtain the following entropy estimate,
%--------------------------------------------------------
 \begin{thm}
\label{thm:cont_ent_stability}
Let $\uc=\left\{\rhoi,\rhoi\vi, \Ei, \rhoe,\rhoe\ve,\Ee,\B,\E,\phi,\psi \right\}^{\top}$ be a weak solution of the two-fluid Eqns. \eqref{eq:bal_law} on $\R^3\times(0,\infty)$. Furthermore, assume that there exist constants $\rhoa^{\min},\rhoa^{\max}$ and $p^{\min}_\al$ such that,
$$
0 \leq \rhoa^{\min} \leq \rhoa \leq \rhoa^{\max}, \quad p_\al \geq p^{\min}_\al > 0, \quad \aie,
$$
then
\begin{equation}
\label{eq:ent_evo}
\frac{d}{dt}\int_{\R^3} (e_i +e_e + e_m)\;dx\;dy\;dz \leq C_1\int_{\R^3}  (\ei + \eel + e_m)\;dx\;dy\;dz  + C_2, 
\end{equation}
with constant $C_1$ and $C_2$ depending only on  $\rhoa^{\min},\rhoa^{\max}$, and $p^{\min}_\al$.
\end{thm}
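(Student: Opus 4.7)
The plan is to derive an entropy/energy balance for each of the three subsystems --- ion Euler, electron Euler, and the perfectly hyperbolic Maxwell block --- sum them, integrate over $\R^3$, and close the resulting inequality by Young's inequality together with the pointwise bounds on $\rhoa$ and $p_\al$.

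First I treat each species $\aie$ as a compressible Euler system carrying the source $\sou_\al=(0,\rhoa\mathbf{a}_\al,\rhoa\mathbf{a}_\al\cdot\va)^\top$, where $\mathbf{a}_\al$ is the (scaled) Lorentz acceleration read off from \eqref{eq:flux_source}. A direct calculation of the entropy variables $\q_\al$ associated with $\ea=-\rhoa\sa/(\ga-1)$ gives momentum component $\rhoa\va/p_\al$ and energy component $-\rhoa/p_\al$, so $\q_\al\cdot\sou_\al\equiv 0$ pointwise: the Lorentz force is entropy-neutral. Hence for weak entropy solutions
\[
\pa_t \ea+\mbox{div}(\ea\va)\leq 0,\qquad\aie.
\]
For the Maxwell block I multiply the $\B$-equation by $\B$, the $\E$-equation by $\E/\ch^2$, and the $\phi$- and $\psi$-equations by $\phi$ and $\psi/\ch^2$, respectively, and sum. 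Using $\mbox{div}(\E\times\B)=\B\cdot(\na\times\E)-\E\cdot(\na\times\B)$ together with $\mbox{div}(\psi\B)=\psi\,\mbox{div}\B+\B\cdot\na\psi$ and its $\phi\E$ analogue, the homogeneous terms collapse into a single divergence, leaving the equality
\[
\pa_t e_m+\mbox{div}(\F_m)=\frac{\xi\phi(r_i\rhoi+r_e\rhoe)}{\debye^2\rg}-\frac{\E\cdot(r_i\rhoi\vi+r_e\rhoe\ve)}{\ch^2\debye^2\rg},
\]
where $\F_m$ collects the Poynting vector and the two divergence-cleaning fluxes.

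Adding the three relations and integrating over $\R^3$ --- with the divergence terms vanishing under standard decay at infinity --- reduces the claim to bounding the two source integrals on the right. For these I apply Young's inequality together with $\rhoa\leq\rhoa^{\max}$: schematically, $|\phi\rhoa|\leq \tfrac12\phi^2+\tfrac12(\rhoa^{\max})^2$ and $|\E\cdot\rhoa\va|\leq (\rhoa^{\max}/(2\ch^2))|\E|^2+(\ch^2/2)\rhoa|\va|^2$. The $\phi^2$ and $|\E|^2$ pieces are absorbed into $e_m$ via $\phi^2\leq 2 e_m$ and $|\E|^2\leq 2\ch^2 e_m$; the kinetic-energy term $\rhoa|\va|^2$ is re-expressed in terms of the thermodynamic entropy $\ea$ using the lower bound $p_\al\geq p_\al^{\min}$ and the density bounds $\rhoa\in[\rhoa^{\min},\rhoa^{\max}]$. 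The resulting inequality is exactly \eqref{eq:ent_evo}, with constants $C_1,C_2$ depending only on those bounds and on the physical parameters $\xi,\ka,\ch,\debye,\rg,r_i,r_e$. The main technical obstacle is precisely this final closure step: the current--field coupling $\E\cdot\rhoa\va$ and the charge--potential coupling $\phi\rhoa$ are not directly controlled by the entropy integrand $\ei+\eel+e_m$, and it is only the combination of the a priori pointwise bounds on $\rhoa$ (used to trade $\rhoa\va$ for $\rhoa^{1/2}\cdot\rhoa^{1/2}|\va|$) and on $p_\al$ (used to tie kinetic energy back to the thermodynamic entropy) that turns the coupling into a Gr\"onwall-type bound.
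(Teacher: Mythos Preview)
Your plan is essentially the paper's own route: show that the Lorentz source is entropy-neutral for each Euler species (so the fluid entropies are non-increasing), derive the energy balance for the perfectly hyperbolic Maxwell block with the current/charge couplings appearing as sources, integrate over $\R^3$, and close by estimating those couplings against $e_m$ and the fluid entropies via Young/Cauchy. On the first two steps you are in fact more explicit than the paper (the computation $\q_\al\cdot\sou_\al=0$ and the Poynting/cleaning-flux identity are exactly what is needed).

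The one place where your write-up is weaker than the paper is the final closure. You say the kinetic-energy term $\rhoa|\va|^2$ is ``re-expressed in terms of the thermodynamic entropy $\ea$ using the lower bound $p_\al\geq p_\al^{\min}$ and the density bounds.'' Read literally, this cannot be a pointwise statement: for fixed $\rhoa$ and $p_\al$ the quantity $\ea=-\rhoa\sa/(\ga-1)$ is fixed while $|\va|$ is unconstrained, so no inequality of the form $\rhoa|\va|^2\leq C\,\ea+C'$ holds pointwise. What the paper invokes at precisely this spot is the relative-entropy method of Dafermos: the strict convexity of $\ea$ \emph{as a function of the conserved variables $(\rhoa,\rhoa\va,\Ea)$} (not as a function of $(\rhoa,p_\al)$) is what yields the integral bound $\int(\rhoa^2+|\rhoa\va|^2+\Ea^2)\leq C\int\ea + C'$ under the stated density and pressure bounds. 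You should make this convexity/relative-entropy argument explicit rather than presenting the step as an algebraic consequence of the pointwise bounds alone; once you do, your argument and the paper's coincide.
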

\begin{proof} Let us first consider the fluid part of the equations. The entropy fluxes corresponding to the flow entropies \eqref{eq:ent_flow} are,
\be
\qa=-\frac{\rhoa s_\al\va}{\ga-1}=\va\ea, \quad \aie.
\label{eq:ent_flux_ent}
\ee
Assuming that $\uc$ is a smooth solution of \eqref{eq:bal_law}, the densities  $\rhoa$ and the pressures $p_\al,$ satisfy,
\begin{align*}
\pa_t \rhoa + \va \cdot\na \rhoa =0, \\
\pa_t {p_{\al}} + \gamma p_\al \na\cdot\va + \va\cdot\na p_{\al} =0.
\end{align*}
Using the expression for $s_\al,$ we get
$$
\pa_t s_\al + \va\cdot\na s_\al=0.
$$
Combining this with density advection we get {\it entropy conservation}, i.e.
\be
\pa_t e_\al + \na \cdot \qa =0.
\label{eq:flow_ent_cons}
\ee
Observe that \eqref{eq:flow_ent_cons} implies that the source term does not effect the evolution of fluid entropies. For weak solutions,  \eqref{eq:flow_ent_cons} reduces to {\it entropy inequality},
\be
\pa_t e_\al + \na \cdot \qa \le0.
\label{eq:flow_ent_inq}
\ee
Integrating over $\R^3$  and adding,
\be
\frac{d}{d t}\int_{\R^3} (e_i + e_e) dx \;dy\;dz  \le 0.
\label{eq:ent_decay}
\ee
For controlling the electromagnetic energy, we use the following inequality,
\begin{equation}
\label{eq:energy1}
\int_{\R^3} \left(\rhoa ^2 + |\rhoa \va |^2 + \Ea^2\right)\;\; dx\;dy\;dz  \leq C_3\int_{\R^3} \ea dx\;dy\;dz  + C_4,
\end{equation}
for some constants $C,\overline{C}$.The proof of \eqref{eq:energy1} is a simple consequence of the positivity of density and pressure and the use of the relative entropy method of Dafermos \cite{DAF1}. We multiply \eqref{eq:bal_law} with the vector,
$$
\left\{{\bf 0}_{10},\B,\frac{\E}{\ch^2},\phi, \frac{\psi}{\ch^2} \right\}^{\top}
$$
and note that flux terms are still in divergence form. Integrating over the whole space and using Cauchy's inequality on the right hand side, we get,
\be
\label{eq:em_ent_decay}
\frac{d}{dt} \int_{\R^3}e_m dx\;dy\;dz  \leq C_5 \left(\int_{\R^3} e_m dx\;dy\;dz  + \int_{\R^3} ( \ei + \eel) dx\;dy\;dz \right) + C_6.
\ee
Combining it with \eqref{eq:em_ent_decay} we obtain \eqref{eq:ent_evo}.
\end{proof}
\begin{rem}
Note that above proof of the theorem also gives a bound on the fluid energy of the system.
\end{rem}
%\subsection{Nature of the source terms}
%-----------------------------------------
\section{Semi-Discrete Schemes}
\label{sec:semi}
In the last section, we showed that solutions of the two-fluid equations satisfy the entropy estimate \eqref{eq:ent_evo}. In this section, we will design (semi-discrete) numerical schemes for the two-fluid equations that satisfy a discrete version of the entropy estimate. 

 For simplicity, we consider two-fluid eqns. \eqref{eq:bal_law} in two dimensions, i.e.,
\be
\pa_t \uc +\pa_x \f^x(\uc)+\pa_y \f^y(\uc)=\sou(\uc).
\label{eq:tf2d}
\ee
We discretize the two dimensional rectangular domain $D=(x_a,x_b)\times(y_a,y_b)$ uniformly with mesh size $(\Dx,\Dy)$. We define $x_i=x_a+i\Dx$ and $y_j=y_a+j\Dy$, $0\le i \le N_x,$ $0\le j\le N_y,$  such that $x_b=x_a+N_x\Dx$ and $y_b=y_a+N_y\Dx$. The domain is then divided into cells $I_{ij}=[x_\imh,x_\iph]\times [y_\jmh,y_\jph]$ with $x_\iph=\frac{x_i +x_{i+1}}{2}$ and  $y_\jph=\frac{y_j +y_{j+1}}{2}$. A standard semi-discrete finite difference scheme for the eqn. \eqref{eq:tf2d} can be written as,
\begin{equation}
\label{eq:semi_2d}
\frac{d \Uij}{dt}+ \frac{1}{\Dx}\left(\F_{\iph,j}^x -\F^{x}_{\imh,j}\right)+ \frac{1}{\Dy}\left(\F_{i,\jph}^y -\F^{y}_{i,\jmh}\right)  = \Sij(\U).
\end{equation}
Here, $\F^x_{\iph,j}$ and $\F^y_{i,\jph}$ are the numerical fluxes consistent with $\f^x$ and $\f^y$ respectively, and $\Sij(\U)=\sou(\Uij)$. We introduce the  {\it entropy variables} $\V$ and {\it entropy potential} $\chi^k$ which corresponds to the entropy $e=\{e_i,e_e,e_m\}^{\top}$
\be
\label{eq:ent_var_pot}
\V=\left\{\begin{array}{c}
\V_i\\
\V_e\\
\V_m
\end{array}\right\}=
\left\{\begin{array}{c}
\pa_{\uc_i}\ei(\uc_i)\\
\pa_{\uc_e}\eel(\uc_e)\\
\pa_{\uc_m}\emax(\uc_m)
\end{array}\right\},
\qquad
 \chi^k=\left\{\begin{array}{c}
\chi_i^k\\
\chi_e^k\\
\chi_m^k
\end{array}\right\}=
 \left\{\begin{array}{c}
\V_i^{\top}\f^k_i-\q^k_i\\
\V_e^{\top}\f^k_e-\q^k_e\\
\V_m^{\top}\f^k_m-\q^k_m
\end{array}\right\},
\ee
where $\q_m^k$ is the entropy flux for the Maxwell part corresponding to the entropy $e_m$ and $k\in\{x,y\}.$ 
We will follow the framework of Tadmor ( see \cite{tadmor1987,tadmor2004}) for designing an entropy stable scheme for the two-fluid equations. The first step is to design an entropy conservative flux.
 \subsection{Entropy conservative flux}
\label{subsec:ec} We require the following notation:
\begin{eqnarray*}
[a]_{\iph,j}=a_{i+1,j}-a_{i,j},\quad \overline{a}_{\iph,j}=\frac{1}{2}(a_{i+1,j}+a_{i,j}),\\ \nonumber
[a]_{i,\jph}=a_{i,j+1}-a_{i,j},\quad \overline{a}_{i,\jph}=\frac{1}{2}(a_{i,j+1}+a_{i,j}).
\end{eqnarray*}
Following \cite{tadmor1987}, an entropy conservative flux $\hat{\F} = \{\hat{\F}^x,\hat{\F}^y\}$ is defined as a consistent flux that satisfies
\be
\label{eq:ecflux}
%\begin{aligned}
[\V]_{\iph,j}^{\top} \hat{\F}^x_{\iph,j} =[\chi^x]_{\iph,j}, \qquad [\V]_{i,\jph}^{\top} \hat{\F}^y_{i,\jph} =[\chi^y]_{i,\jph}.
%\end{aligned}
\ee
%\end{thm} 
In general, the relation for conservative flux, \eqref{eq:ecflux} provides one equation for several unknowns. Hence, entropy conservative numerical flux is not unique. We will now describe entropy conservative numerical fluxes for the fluid part of the two-fluid equations.

In \cite{roe}, Ismail and Roe have derived an expression for entropy conservative numerical fluxes for Euler equations of gas dynamics. As the fluid part of \eqref{eq:bal_law} consists of two independent Euler fluxes, we can use the expression derived in \cite{roe} for the entropy conservative numerical flux of the Euler flows of ion and electron. We need to introduce parametric vectors $\z_\al,\;\;\aie$,
\begin{equation}
\label{eq:ent_var_f}
\z_\al=\left[\begin{array}{c}
z_\al^1\\
z_\al^2\\
z_\al^3\\
z_\al^4\\
z_\al^5
\end{array}\right]\quad
=\;\; \sqrt{\frac{\rhoa}{p_\al}}\left[\begin{array}{c}
1\\
v_\al^x\\
v_\al^y\\
v_\al^z\\
p_\al
\end{array}\right] , \qquad
\aie.
\end{equation} 
Then the entropy conservative numerical flux in x-direction is given by  $\hat{\F}^x_{\al,\iph,j}=[\hat{\F}^{x,1}_{\al,\iph,,j}, \hat{\F}^{x,2}_{\al,\iph,j}$, $ \hat{\F}^{x,3}_{\al,\iph,j}, \hat{\F}^{x,4}_{\al,\iph,j}, \hat{\F}^{x,5}_{\al,\iph,j}]^{\top}$, with,
\begin{eqnarray}
\hat{\F}^{x,1}_{\al,\iph,j}&=&\overline{z^2}_{\al,\iph,j}{z_{\al}^5}^{\ln}_{\iph,j},\\ \nonumber
\hat{\F}^{x,2}_{\al,\iph,j}&=& m^5_{\al,\iph,j}+ m^2_{\al,\iph,j}\hat{\F}^{x,1}_{\al,\iph,j} ,\\ \nonumber
\hat{\F}^{x,3}_{\al,\iph,j}&=&m^3_{\al,\iph,j}\hat{\F}^{x,1}_{\al,\iph,j} ,\\ \nonumber
\hat{\F}^{x,4}_{\al,\iph,j}&=&m^4_{\al,\iph,j}\hat{\F}^{x,1}_{\al,\iph,j} ,\\ \nonumber
\hat{\F}^{x,5}_{\al,\iph,j}&=&\frac{1}{2 \overline{z^1}_{\al,\iph,j} } \left( \frac{\gamma+1}{\ga-1} \frac{\hat{\F}^{x,1}_{\al,\iph,j}} 
 { {z^1}^{\ln}_{\al,\iph,j}}  +  \overline{z^2}_{\al,\iph,j} \hat{\F}^{x,2}_{\al,\iph,j}\right.  \\ \nonumber
 &&\left.+   \overline{z^3}_{\al,\iph} \hat{\F}^{x,3}_{\al,\iph,j} +   \overline{z^4}_{\al,\iph,j} \hat{\F}^{x,4}_{\al,\iph,j}\right).
\end{eqnarray}
and entropy conservative numerical flux in y-direction is,
$\hat{\F}^y_{\al,i,\jph}=[\hat{\F}^{y,1}_{\al,i,\jph}, \hat{\F}^{y,2}_{\al,i,\jph}$, $ \hat{\F}^{y,3}_{\al,i,\jph}, \hat{\F}^{y,4}_{\al,i,\jph}, \hat{\F}^{y,5}_{\al,i,\jph}]^{\top}$, with,
\begin{eqnarray}
\hat{\F}^{y,1}_{\al,i,\jph}&=&\overline{z^3}_{\al,i,\jph}{z^5}^{\ln}_{\al,i,\jph},\\ \nonumber
\hat{\F}^{y,2}_{\al,i,\jph}&=&m^2_{\al,i,\jph}  \hat{\F}^{y,1}_{\al,i,\jph} ,\\ \nonumber
\hat{\F}^{y,3}_{\al,i,\jph}&=&m^3_{\al,i,\jph}+ m^3_{\al,i,\jph} \hat{\F}^{y,1}_{\al,i,\jph} ,\\ \nonumber
\hat{\F}^{y,4}_{\al,i,\jph}&=&m^4_{\al,i,\jph}  \hat{\F}^{y,1}_{\al,i,\jph},\\ \nonumber
\hat{\F}^{y,5}_{\al,i,\jph}&=&\frac{1}{2 \overline{z^1}_{\al,i,\jph} } \left( \frac{\gamma+1}{\ga-1} \frac{\hat{\F}^{y,1}_{\al,i,\jph}} 
 { {z_{\al,}^1}^{\ln}_{i,\jph}}  +  \overline{z^2}_{\al,i,\jph}\hat{\F}^{y,2}_{\al,i,\jph}\right.  \\ \nonumber
 &&\left.+   \overline{z^3}_{\al,i,\jph} \hat{\F}^{y,3}_{\al,i,\jph} +   \overline{z^4}_{\al,i,\jph} \hat{\F}^{y,4}_{\al,i,\jph}\right),
\end{eqnarray}
Here, $a_{\iph,j}^{ln}$ and $a_{i,\jph}^{ln}$ denotes the logarithmic means defined as,
$$
a^{ln}_{\iph,j} = \frac{[a]_{\iph,j}}{[\log{(a)}]_{\iph,j}}, \qquad a^{ln}_{i,\jph} = \frac{[a]_{i,\jph}}{[\log{(a)}]_{i,\jph}},
$$
and
$$
m^r_{\al,\iph,j}=\frac{\overline{z^r}_{\al,\iph,j}}{\overline{z^1}_{\al,\iph,j}}, \qquad m^r_{\al,i,\jph}=\frac{\overline{z^r}_{\al,i,\jph}}{\overline{z^1}_{\al,i,\jph}},\qquad \mbox{for}\qquad r\in\{2,3,4,5\}.
$$
Now we will consider the electromagnetic part. Note the Maxwell flux is linear.
%The entropy variable and entropy potential are
%\begin{equation}
%\V_{m}(\uc_m)= \uc_m,\quad \chi(\uc_m)= \frac{1}{2}\uc_m^{\top} \uc_m.
%\label{eq:max_ent_var_pot}
%\end{equation}
Then, it is easy to check that the entropy conservative numerical flux for the electromagnetic part is
\begin{equation}
\label{eq:ent_con_flux_max}
\hat{\F}^x_{m,\iph,j}=\frac{1}{2} (\f^x(\U_{m,i,j})+ \f^x(\U_{m,i+1,j}) ),\quad \hat{\F}^y_{m,i,\jph}=\frac{1}{2} (\f^y(\U_{m,i,j})+ \f^y(\U_{m,i,j+1}) ).
\end{equation}
Combining all the parts, the entropy conservative numerical flux for the Eqn. \eqref{eq:bal_law} are given by,
 \be
 \label{eq:ent_con_flux}
\hat{\F}^x_{\iph,j}=\left\{\begin{array}{c}
\hat{\F}^x_{i,\iph,j}\\
\hat{\F}^x_{e,\iph,j}\\
\hat{\F}^x_{m,\iph,j}\\
\end{array}\right\},\qquad
\hat{\F}^y_{i,\jph}=\left\{\begin{array}{c}
\hat{\F}^y_{i,\jph}\\
\hat{\F}^y_{e,i,\jph}\\
\hat{\F}^y_{m,i,\jph}\\
\end{array}\right\}.
\ee
%-----------------------------------------------------
\subsection{Numerical diffusion operator}
\label{subsec:diff}
As entropy is dissipated at shocks, we need to add {\it entropy stable} numerical diffusion operators to avoid spurious oscillations at shocks. Following \cite{tadmor2004},the resulting numerical fluxes are of the form,
\begin{equation}
\label{eq:ent_stable_flux}
\F^x_{\iph,j}=\hat{\F}^x_{\iph}-\frac{1}{2}\D^x_{\iph}[\V]_{\iph,j}, \qquad \F^y_{i,\jph}=\hat{\F}^x_{\iph}-\frac{1}{2}\D^y_{i,\jph}[\V]_{i,\jph}.
\end{equation}
with diffusion matrices are given by,
\begin{equation}
\label{eq:diff_matrix}
\D^x_{\iph}=R^x_{\iph,j}\Lambda^x_{\iph,j} R^{x\top}_{\iph,j},\qquad \D^y_{i,\jph}=R_{i,\jph}^y\Lambda^y_{i,\jph} R^{y \top}_{i,\jph}.
\end{equation}
Here $R^{\{x,y\}}$ are the matrices of right eigenvectors of Jacobians $\pa_{\uc}\f^{\{x,y\}}$ and $\Lambda^{\{x,y\}}$ are diagonal matrices of eigenvalues in the $x$- and $y$-directions, respectively. We will use a Rusanov  type diffusion operator given by a $18\times 18$ matrix,
$$
\Lambda^{\{x,y\}}=\Lambda=\mbox{diag}\{(\max_{1\le i\le5}|\la^x_i|) I_{5\times5}, (\max_{6\le i\le10}|\la^x_i| )I_{5\times5},(\max_{1\le i\le18}|\la^x_i|) I_{8\times8}\}.
$$ 
We use the eigenvector scaling due to Barth \cite{barth} for defining the eigenvector matrices.
 
%-------------------------------------------------------------------
\subsection{Second Order Dissipation Operator} The diffusion operators described above are of first order, as the jump term $[\V]$ is of order $\Dx$. To obtain the second order accurate scheme, we can perform piecewise linear reconstructions of the entropy variable $\V$. However, a straightforward reconstruction of the entropy variables may not be entropy stable. In \cite{ulrik}, the authors have constructed entropy stable second order (and even higher-order) diffusion operators. For simplicity, we will consider the diffusion operator, $\D^x_{\iph,j}[\V]_{\iph,j}$ in x-direction only. The diffusion operator in y-direction, $\D^y_{i,\jph}[\V]_{i,\jph}$ can be constructed analogously. We need to introduce \emph{scaled} entropy variables,
$$
\W^{x,\pm}_{i,j}=R_{\ipmh,j}^{x \top}\V_{i,j}.
$$
If $ \tilde{\W}^{x,\pm}_{i,j}$ are the reconstructed values of $\W^{x \pm}$ in the x-direction, then the corresponding reconstructed values $\Po_i^{x\pm}$ for $\V_{ij}$ are given by,
$$
\Po_{ij}^{x\pm}=\{R^{x T}_{i\pm\iph,j}\}^{(-1)} \tilde{\W}^{x,\pm}_{i,j}.
$$
The resulting second order entropy stable flux is then given by,
\be
\label{eq:ent_2nd_flux}
\F^x_{\iph,j}=\hat{\F}^x_{\iph}-\frac{1}{2}\D^x_{\iph}[\Po^x]_{\iph,j}, 
\ee
where the jump term $[\Po^x]_{\iph,j}$ is given by,
$$
[\Po^x]_{\iph,j}=\Po_{i+1,j}^{x-} - \Po_{i,j}^{x+}.
$$
A sufficient condition for the scheme to be entropy stable (see \cite{ulrik}) is the existence of a diagonal matrix $B^x\ge 0$, such that,
$$
 [\tilde{\W}^x]_{\iph,j}=B^x_{\iph,j} [\W^x]_{\iph,j},
$$
i.e. the reconstruction of $\W^x$ has to satisfy a {\it sign preserving property} along the interfaces of each cell. Component-wise this can be written as,
\be
\label{eq:sign_comp}
\mbox{sign}([\tilde{w}^l])=\mbox{sign}([w^l]),
\ee
for each component $w^l$ and $\tilde{w}^l$ of  $\W^x$ and $\tilde{\W}^x$, respectively.
\subsection{Reconstruction Procedure} We suppress the $j$-dependence below for notational convenience. The reconstruction for $\W^x$ is performed component-wise, so that \eqref{eq:sign_comp} is satisfied. Let us define jump of component $w$ of the variable $\W^x$, 
\be
\delta_{\iph}= [w]_{\iph}.
\label{eq:difference}
\ee
Consider $\phi$, a slope limiter satisfying $\phi(\theta^{-1})=\phi(\theta)\theta^{-1}$ and define divided differences,
$$
\theta_i^-=\frac{\delta_{\iph}}{\delta_{\imh}} \quad \mbox{and}\quad\theta_i^+= \frac{\delta_\imh}{\delta_\iph}.
$$
Then the reconstructed values of $w$ in the cell $I_i$ are
$$
\tilde{w}_i^-=w_i^--\frac{1}{2}\phi(\theta_i^-)\delta_\imh, \qquad \tilde{w}_i^+=w_i^++\frac{1}{2}\phi(\theta_{i+1}^+)\delta_\iph.
$$
Using these values we obtain
$$
[ \tilde{w}]_\iph=\left( 1- \frac{1}{2} ( \phi(\theta_i^+) + \phi(\theta_{i+1}^-)   )\right)\delta_{\iph}.
$$
This shows that the sign property is satisfied iff
$$
\phi(\theta)\le 1, \quad \forall \;\;\theta\in\R.
$$
In this article, we will use the minmod limiter based reconstruction which satisfies the sign preserving property (see \cite{ulrik}). The minmod limiter is given by,
 \be
\phi(\theta) =\begin{cases}
0, & \mbox{if} \;\;\theta<0,\\
\theta, & \mbox{if} \;\;0\le\theta\le1,\\
1, & \mbox{else}.
\end{cases}
\ee

%%-----------------------------------------------------

\subsection{Discrete entropy stability} In this section, we prove that scheme given by the flux  \eqref{eq:ent_2nd_flux} is entropy stable i.e. it satisfies a discrete version of the entropy estimate \eqref{eq:ent_evo}. We have the following result,
 \label{subsec:dis_es}
\begin{thm}
\label{theo:2}
The semi-discrete finite difference scheme \eqref{eq:semi_2d}, with entropy stable numerical flux \eqref{eq:ent_2nd_flux}, is second order accurate for smooth solutions. Furthermore, it satisfies,
\be
\frac{d}{dt}\sum_{i,j}(e_{i,i,j}+e_{e,i,j} + e_{m,i,j}) \Dx\Dy \le C_7 \sum_{i,j}(e_{i,i,j}+e_{e,i,j} + e_{m,i,j}) \Dx\Dy+C_8
\label{eq:dis_ener_stab}
\ee
if conditions for Theorem \ref{thm:cont_ent_stability} are satisfied.
%\be
%\frac{d e_\al}{ dt}+\frac{1}{\Dx}\left(\Q^x_{\al,\iph,j} -  \Q^x_{\al,\imh,j} \right) +\frac{1}{\Dy}\left( \Q^y_{\al,i,\jph} - {\Q}^y_{\al,i,\jmh} \right)\le0, \quad \aie,
%\label{eq:es2d_fluid}
%%\ee
%for the numerical entropy flux,
%\be
%\label{eq:ent_stab_ent_flux}
%\Q^x_{\iph,j}=\overline{\V}_{\iph,j}^{\top}\F^x_{\iph,j}-\bar{\chi}_{\iph,j}, \qquad \Q^y_{i,\jph}=\overline{\V}_{i,\jph}^{\top}\F^y_{i,\jph}-\bar{\chi}_{i,\jph}.
%\ee

\end{thm}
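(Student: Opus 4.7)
The plan is to mirror the continuous argument of Theorem~\ref{thm:cont_ent_stability} at the semi-discrete level. I multiply the scheme \eqref{eq:semi_2d} cell-wise by the entropy variable $\V_{i,j}^{\top}$ from \eqref{eq:ent_var_pot} and sum against $\Dx\Dy$. Because $\V=\pa_{\uc}e$ with $e=\ei+\eel+\emax$, the time-derivative term reproduces the left-hand side of \eqref{eq:dis_ener_stab} exactly. Assuming periodic or compactly supported data so that boundary contributions vanish, what remains is to analyse the flux sums and the source contribution.

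A discrete summation by parts in $x$ converts $\sum_{i,j}\V_{i,j}^{\top}(\F^x_{\iph,j}-\F^x_{\imh,j})\Dy$ into $-\sum_{i,j}[\V]^{\top}_{\iph,j}\F^x_{\iph,j}\Dy$. Inserting the decomposition \eqref{eq:ent_2nd_flux}, the entropy-conservative part produces $-\sum_{i,j}[\chi^x]_{\iph,j}\Dy$ via \eqref{eq:ecflux}, which is a telescoping sum and vanishes. For the diffusion part, substituting $[\W^x]_{\iph,j}=R^{x,\top}_{\iph,j}[\V]_{\iph,j}$, $[\Po^x]_{\iph,j}=(R^{x}_{\iph,j})^{-\top}[\tilde{\W}^x]_{\iph,j}$, $\D^x_{\iph,j}=R^x_{\iph,j}\Lambda^x_{\iph,j}R^{x,\top}_{\iph,j}$, together with the sign-preservation identity \eqref{eq:sign_comp} in its matrix form $[\tilde{\W}^x]=B^x[\W^x]$, yields
\begin{equation*}
\hf[\V]^{\top}_{\iph,j}\D^x_{\iph,j}[\Po^x]_{\iph,j}=\hf[\W^x]^{\top}_{\iph,j}\Lambda^x_{\iph,j}B^x_{\iph,j}[\W^x]_{\iph,j}\ge 0,
\end{equation*}
which is non-negative since $\Lambda^x_{\iph,j}$ and $B^x_{\iph,j}$ are both diagonal with non-negative entries. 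The $y$-sweep is identical. Hence, after summation by parts, both flux sums sit on the left-hand side of the balance as non-negative terms and can be dropped to produce an inequality.

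For the source term, I will show that only the electromagnetic entry contributes. A pointwise computation with the Euler entropy variables $\V_\al^{2,3,4}=\rhoa\va/p_\al$, $\V_\al^{5}=-\rhoa/p_\al$ gives, for each $\aie$,
\begin{equation*}
\V_\al^{\top}\sou_\al=\frac{\rhoa^2}{p_\al\rg}\bigl[\va\cdot(\E+\va\times\B)-\E\cdot\va\bigr]=\frac{\rhoa^2}{p_\al\rg}\,\va\cdot(\va\times\B)=0,
\end{equation*}
(with the analogous vanishing for the electron species carrying the additional $-\la_m$ factor), which is the pointwise discrete counterpart of the remark following \eqref{eq:flow_ent_cons}. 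Only $\V_m^{\top}\sou_m$ survives, and applying Cauchy's inequality to the $\E\cdot(r_i\rhoi\vi+r_e\rhoe\ve)$ and $\phi(r_i\rhoi+r_e\rhoe)$ terms, combined with the control \eqref{eq:energy1} of $|\rhoa\va|^2$ and $\rhoa^{2}$ by $\ea$, yields a cell-wise bound $\V_{m,i,j}^{\top}\sou_m(\U_{i,j})\le C(e_{i,i,j}+e_{e,i,j}+e_{m,i,j})+C'$. Summing over $(i,j)$ weighted by $\Dx\Dy$ delivers \eqref{eq:dis_ener_stab}.

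For second-order accuracy, on smooth data the Ismail--Roe flux and the averaged Maxwell flux \eqref{eq:ent_con_flux_max} are second-order consistent, and the minmod reconstruction of Section~\ref{subsec:diff} renders $[\Po^x]_{\iph,j}$ of size $O(\Dx^2)$ away from extrema, so the added dissipation does not spoil the order. The main obstacle is the simultaneous algebraic reduction of the diffusion quadratic form to a manifestly non-negative diagonal expression, which relies on the Barth-type scaling of eigenvectors enforcing the compatibility $R^{x,\top}R^{x}$-diagonalisation behind \eqref{eq:diff_matrix}, together with the exact pointwise cancellation $\V_\al^{\top}\sou_\al\equiv 0$; once these two identities are in place the remainder mirrors the continuous proof of Theorem~\ref{thm:cont_ent_stability} with Riemann sums replacing integrals.
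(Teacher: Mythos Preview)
Your proposal is correct and follows essentially the same route as the paper: the same three ingredients drive both arguments, namely the identity $[\V]^{\top}\hat{\F}=[\chi]$ from \eqref{eq:ecflux}, the non-negativity $[\V]^{\top}\D[\Po]=[\W]^{\top}\Lambda B[\W]\ge 0$ via the sign-preserving reconstruction, and the pointwise cancellation $\V_\al^{\top}\sou_\al=0$ followed by Cauchy's inequality and the Dafermos-type bound \eqref{eq:energy1} on the surviving Maxwell source. The only cosmetic difference is that the paper first records a \emph{local} cell entropy inequality for the fluid parts (with explicit numerical entropy fluxes $\Q^x_{\iph,j},\Q^y_{i,\jph}$) before summing, whereas you pass immediately to the global sum by summation by parts; since the theorem asks only for the summed estimate \eqref{eq:dis_ener_stab}, your shortcut loses nothing.
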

\begin{proof}
It is easy to see that the scheme is of second order accuracy, as both the entropy conservative flux $\hat{\F}$ and the numerical diffusion operator, are second order accurate for smooth solutions. Now, consider the fluid part of \eqref{eq:semi_2d}, i.e.
\begin{equation}
\frac{d \U_{\al,i,j}}{dt}+ \frac{1}{\Dx}\left(\F_{\al,\iph,j}^x -\F^{x}_{\al,\imh,j}\right)+ \frac{1}{\Dy}\left(\F_{\al,i,\jph}^y -\F^{y}_{\al,i,\jmh}\right)  = \Sn_{\al,i,j}(\U),
\label{eq:semi_dis_f}
\end{equation}
for $\aie$ with {\it entropy numerical fluxes},
\be
\label{eq:ent_stab_ent_flux}
\Q^x_{\iph,j}=\overline{\V}_{\iph,j}^{\top}\F^x_{\iph,j}-\bar{\chi}_{\iph,j}, \qquad \Q^y_{i,\jph}=\overline{\V}_{i,\jph}^{\top}\F^y_{i,\jph}-\bar{\chi}_{i,\jph}.
\ee
Multiplying \eqref{eq:semi_dis_f} with $\V_{\al,i,j}^{\top}$ and imitating the proof of Theorem 2.2 from \cite{tadmor1987}, we get
\begin{eqnarray*}
\frac{d e_\al(\Uij)}{ dt} &=&\frac{1}{\Dx}\left( \hat{\Q}^x_{\iph,j} -\hat{\Q}^x_{\imh,j}\right)-\frac{1}{\Dx}\left( \hat{\Q}^y_{i,\jph} -\hat{\Q}^y_{i,\jmh}\right)+\V^{\top}_{\al,i,j}\Sn_{\al,i,j}(\U)\\
&-&\frac{1}{2 \Dx}\left( [\V]_{\iph,j}^{\top} \D^x_{\iph,j} [\Po^x]_{\iph,j} +  [\V]_{\imh,j}^{\top} \D^x_{\imh,j} [\Po^x]_{\imh,j} \right)\\
&-&\frac{1}{2 \Dy}\left( [\V]_{i,\jph}^{\top} \D^y_{i,\jph} [\Po^y]_{i,\jph} +  [\V]_{i,\jmh}^{\top} \D^y_{i,\jmh} [\Po^y]_{i,\jmh} \right) \\
&=&-\frac{1}{\Dx}\left( {\Q}^x_{\iph,j} -{\Q}^x_{\imh,j}\right)-\frac{1}{\Dx}\left( {\Q}^y_{i,\jph} -{\Q}^y_{i,\jmh}\right) +\V^{\top}_{\al,i,j}\Sn_{\al,i,j}(\U)\\
&-&\frac{1}{4 \Dx}\left( [\V]_{\iph,j}^{\top} \D^x_{\iph,j} [\Po^x]_{\iph,j} +  [\V]_{\imh,j}^{\top} \D^x_{\imh,j} [\Po^x]_{\imh,j} \right)\\
&-&\frac{1}{4 \Dy}\left( [\V]_{i,\jph}^{\top} \D^y_{i,\jph} [\Po^y]_{i,\jph} +  [\V]_{i,\jmh}^{\top} \D^y_{i,\jmh} [\Po^y]_{i,\jmh} \right) 
\end{eqnarray*}
Here 
$$
\hat{\Q}^x_{\iph,j}=\overline{\V}_{\iph,j}^{\top}\hat{\F}^x_{\iph,j}-\bar{\chi}_{\iph,j}, \;\mbox{and}\; \;\hat{\Q}^y_{i,\jph}=\overline{\V}_{i,\jph}^{\top}\hat{\F}^y_{i,\jph}-\bar{\chi}_{i,\jph}.
$$
are entropy fluxes corresponding to the entropy conservative fluxes $\hat{\F}^x$ and $\hat{\F}^y$ respectively. Let us consider the diffusion terms. Ignoring all the indices, each diffusion term satisfies,
\begin{eqnarray*}
\label{eq:recons_scaled_entropy_var}
[\V]^\top\D[\Po]&=&[\V]^\top R\Lambda R^\top [\Po]\\
&=&[\V]^\top R\Lambda R^\top (R^\top)^{(-1)}[\tilde{\W}]\\ 
&=&(R^\top[\V])^{\top} \Lambda B ([\W])\\ 
&=&(R^\top[\V])^{\top} \Lambda B (R^\top \V)\\ 
&\geq &0,
\end{eqnarray*}
as both $B$ and $\Lambda$ are non-negative diagonal matrices. So, we get
$$
\frac{d e_{\al,i,j}}{ dt}+\frac{1}{\Dx}\left(\Q^x_{\al,\iph,j} -  \Q^x_{\al,\imh,j} \right) +\frac{1}{\Dy}\left( \Q^y_{\al,i,\jph} - {\Q}^y_{\al,i,\jmh} \right)\le\V^{\top}_{\al,i,j}\Sn_{\al,i,j}.
$$
A simple calculation shows that,
$$
\V^{\top}_{\al,i,j}\Sn_{\al,i,j}=0.
$$
This results in the fluid entropy inequality,
\be
\frac{d e_{\al,i,j}}{dt}+\frac{1}{\Dx}\left(\Q^x_{\al,\iph,j} -  \Q^x_{\al,\imh,j} \right) +\frac{1}{\Dy}\left( \Q^y_{\al,i,\jph} - {\Q}^y_{\al,i,\jmh} \right)\le0, \quad \aie,
\label{eq:es2d_fluid}
\ee
summing over all the cells we get,
\be
\frac{d}{dt}\sum_{i,j}e_{\al,i,j}\Dx\Dy\le0, \quad \aie,
\label{eq:dis_fluid_ent_inq}
\ee

Repeating the entropy argument of Dafermos \cite{DAF1} used in Theorem \ref{thm:cont_ent_stability} we get an discrete energy estimate for fluid part,
\begin{equation}
\label{eq:sid_energy_f_inq}
\sum_{i,j} \left(\rho_{\al,i,j} ^2 + |\rho_{\al,i,j} {\bf v}_{\al,i,j} |^2 + E_{\al,i,j}^2\right)\;\; \Dx\Dy \leq C_9\sum_{i,j}e_{\al,i,j}\Dx\Dy  + C_{10},
\end{equation}
Imitating the proof of Theorem \ref{thm:cont_ent_stability} where integration is replaced by summation, we get,
\be
\label{eq:em_ent_decay}
\frac{d}{dt} \sum_{i,j} e_{m,i,j} \Dx\Dy  \leq C_{11} \sum_{i,j} (e_{m,i,j} + e_{i,i,j} +e_{e,i,j}) \Dx\Dy   + C_{12}.
\ee
 Combining with \eqref{eq:dis_fluid_ent_inq}, we get \eqref{eq:dis_ener_stab}.
\end{proof}
\begin{remark}
In theorem \ref{theo:2}, the discrete energy estimate \eqref{eq:dis_ener_stab} is satisfied only if the electron and ion density and pressure (as required by theorem \ref{thm:cont_ent_stability}) are positive. We assume that this positivity holds for the scheme. Currently, it is not possible to prove that this positivity is also a consequence of the numerical scheme. However, we expect that the use of positivity preserving limiters (like those in \cite{Sp}) will enable us to prove positivity. 
\end{remark}

%%-----------------------------------------
\section{Fully Discrete Schemes}
\label{sec:time}
Let $\U^n$ is the discrete solution at $t^n$, and $\Dt=t^{n+1}-t^n$. Then a semi-discrete scheme \eqref{eq:semi_2d} can be written as,
\be
\frac{d \Unij}{dt}= \Li_{i,j} (\U^n)+ \Sij(\U^n),
\label{eq:time_ode}
\ee
where, 
$$
\Li_{i,j}(\U^n)= -\frac{1}{\Dx}\left(\F_{\iph,j}^x -\F^{x}_{\imh,j}\right)- \frac{1}{\Dy}\left(\F_{i,\jph}^y -\F^{y}_{i,\jmh}\right), \;\;\mbox{and}\;\; \Sij(\U^n)=\sou(\Unij).
 $$
We describe two different time discretization schemes below.
\subsection{Explicit Schemes}
\label{subsec:exp}
We use explicit Runge-Kutta (RK) time marching schemes for the time-discretizing of the two-fluid equations.  For simplicity, we restrict ourselves to the second- and third-order accurate RK schemes (see \cite{tvd}). These methods are strong stability preserving (SSP). In order to advance a numerical solution from time $t^n$ to $t^{n+1}$, the SSP-RK algorithm is as follows:
\begin{enumerate}
\item[1.] Set $\U^{(0)}=\U^{n}$.
\item[2.] For $m=1, .... ,k+1$, compute,
$$
\U^{(m)}_{i,j} = \sum_{l=0}^{m-1}\alpha_{ml} \U^{(l)}_{i,j} + \beta_{ml} \Dt^n ( \Li_{i,j}( \U^{(l)}) + \Sij( \U^{(l)} )).
$$
\item[3.] Set $\U^{n+1}_{i,j}=\U_{i,j}^{(k+1)}$.
\end{enumerate}
The coefficients $\al_{ml}$ and $\beta_{ml}$ are given in Table \ref{tab:rk_coeff}. 
\begin{table}
\begin{center} \begin{tabular}{c|cccccccc}
order 	&			& $\alpha_{il}$	&		&		&		& $\beta_{il}$\\
\hline\\
2 	&		1 	&		&		&		&	1\\
 	&		1/2 	&	1/2	&		&		&	0	&	1/2\\
\hline\\
3	&		1 	&		& 		&		&	1\\	
	&		3/4 	&	1/4	& 		&		&	0 	&	1/4\\	
	&		1/3 	&	0	& 	2/3	&		&	0 	&	0 	&	2/3
\end{tabular}\end{center}	
\caption{Parameters for Runge-Kutta time marching schemes.}
\label{tab:rk_coeff}
\end{table}
%----------------------------------------------------------------
\subsection{IMEX-RK Schemes}
\label{subsec:imex}
As discussed in section \ref{sec:intro}, two-fluid equations contain the following parameters: the speed of light, mass ratio of ions to electrons, Debye length, and the Larmor radius. These parameters determine the time scales of the flow and may impose severe restrictions on the time step of explicit time marching schemes. Hence, we consider IMEX methods in this section. An Implicit-Explicit Runge-Kutta (IMEX-RK) scheme for \eqref{eq:bal_law}, is based on the implicit treatment of the stiff source term and an explicit treatment of the convective flux terms. This allows us to overcome stiffness due to the source terms. 

We will use SSP-RK schemes, as described above, with each intermediate Euler update being carried out by solving,
\begin{eqnarray}
\label{eq:imex_step}
\Uij^{m+1}&=&\Uij^m+\Dt  \mcal{L}_{i,j}(\U^{m}) +\Dt \Sij(\U^{m+1}),
\end{eqnarray} 
for $\U^{m+1}$. Usually \eqref{eq:imex_step} is solved using some iterative methods. However, we can exploit the special  structure of the source term for the two-fluid equations to solve \eqref{eq:imex_step} \emph{exactly}. We proceed as follows: \\
Denote $\U=\{\W_1,\W_2,\W_3\}$ with, 
\begin{eqnarray*}
\W_1& = & \{\rhoi,\rhoe,B^x,B^y,B^z,\psi\}^{\top} \\
\W_2& = & \{\rhoi v_i^x,\rhoi v_i^y,\rhoi v_i^z,\rhoe v_e^x,\rhoe v_e^y,\rhoe v_e^z,E^x,E^y,E^z\}^{\top} \\
\W_3& = & \{\Ei,\Ee,\phi\}^{\top} 
\end{eqnarray*}
We observe that \eqref{eq:imex_step} can be rewritten in the following three blocks,
\sbe
\label{eq:IMEX2}
\begin{eqnarray}
\label{eq:IMEX2a}
\W_{1}^{(m+1)} &=&\G_1(\U^{(m)}),\\
\label{eq:IMEX2b}
\W_{2}^{(m+1)} &=&\G_2(\U^{(m)})+ \mathcal{A}(\W_{1}^{(m+1)})\W_{2}^{(m)},\\
\label{eq:IMEX2c}
\W_{3}^{(m+1)} &=&\G_3(\U^{(m)})+\Hz(\W_{1}^{(m+1)},\W_{2}^{(m+1)}).
\end{eqnarray}
\sen
Here $\G_1,\G_2$ and $\G_3$ are the explicit  parts of  \eqref{eq:imex_step} for the variables $\W_1,\W_2$ and $\W_3$ respectively. The Eqns. \eqref{eq:IMEX2} are then solved in sequential manner:
\begin{enumerate}[I)]
\item  Equation \eqref{eq:IMEX2a} is updated explicitly, as it involves the evaluation of the known terms from the previous time step. 
 \item Note that the matrix $\mathcal{A}(\W_{1}^{(m+1)})$ in Eqn. \eqref{eq:IMEX2b} is,
{\tiny \be
 \label{eq:A}
\left[\begin{array}{ccccccccc}
0 & \frac{B^{z,(m+1)}}{\hat{r}_g} & -\frac{B^{y,(m+1)}}{\hat{r}_g} & 0 & 0 & 0 & \frac{\rho_i^{(m+1)}}{\hat{r}_g} & 0 & 0\\
-\frac{B^{z,(m+1)}}{\hat{r}_g} & 0 & \frac{B^{x,(m+1)}}{\hat{r}_g} &  0 & 0 & 0 & 0 &  \frac{\rho_i^{(m+1)}}{\hat{r}_g}   & 0\\
 \frac{B^{y,(m+1)}}{\hat{r}_g} & -\frac{B^{x,(m+1)}}{\hat{r}_g} & 0 & 0 & 0 &  0 & 0 & 0 & \frac{\rho_i^{(m+1)}}{\hat{r}_g} \\
0 & 0 & 0 & 0 & \frac{B^{z,(m+1)}}{\hat{r}_{e,g}} & -\frac{B^{y,(m+1)}}{\hat{r}_{e,g}} & \frac{\rho_e^{(m+1)}}{\hat{r}_{e,g}} & 0 & 0\\
0 & 0 & 0 & -\frac{B^{z,(m+1)}}{\hat{r}_{e,g}} & 0 &\frac{B^{x,(m+1)}}{\hat{r}_{e,g}} &  0 & \frac{\rho_e^{(m+1)}}{\hat{r}_{e,g}}  & 0\\
0 & 0 & 0 &   \frac{B^{y,(m+1)}}{\hat{r}_{e,g}} & -\frac{B^{x,(m+1)}}{\hat{r}_{e,g}} & 0 &  0 & 0 &\frac{\rho_e^{(m+1)}}{\hat{r}_{e,g}} \\
 \frac{-r_i}{K} & 0 & 0 & \frac{-r_e}{K} & 0 & 0 & 0 & 0 & 0   \\
0 & \frac{-r_i}{K}  & 0 &  0 &\frac{-r_e}{K}  & 0 & 0 & 0 & 0   \\  
 0 & 0 &\frac{-r_i}{K}  & 0 & 0 &\frac{-r_e}{K}   & 0 & 0 & 0 
\end{array}\right]
\ee}
with $\hat{r}_{e,g} =-\hat{r}_g/\lambda_m$ and $K=\hat{\lambda}^2\hat{r}_g$. All the quantities in the matrix are already computed in step I. So, we can rewrite Eqn. \eqref{eq:IMEX2b} as,
\begin{equation}
\label{eq:IMEX_solve}
  \W_{2}^{(m+1)} =\left(\mathbf{I}- (\Dt) \mathcal{A}(\W_{1}^{(m+1)}) \right)^{(-1)}\G_2(\U^{(m)}).
\end{equation}
which can evaluated exactly.

\item The Eqn. \eqref{eq:IMEX2c} is now updated  for $\W_3^{m+1}$ by evaluating $\Hz(\W_{1}^{m+1},\W_{2}^{m+1})$.
\end{enumerate}

\begin{remark}
The IMEX scheme proposed above does not require any non-linear Newton solves or any global matrix inversions. It only needs explicit evaluations of the inverse of a local $9 \times 9$ matrix in each cell making this scheme computationally inexpensive. Furthermore, there are \emph{no} local linearizations or approximations being used in the scheme. It uses an exact solution of the time stepping update \eqref{eq:imex_step}.
\end{remark}

\begin{remark}
Note that the wave speeds of the system depend on the speed of light and the sound speeds of the electron and ion. The speed of these waves is either specified or determined by the flux terms of the two-fluid equations. Consequently, an explicit in time, evaluation of the flux terms, as in an IMEX scheme, might still lead to severe time step restrictions on account of these waves. 
\end{remark}

%%--------------------------------------------------------------------------
\section{Numerical Results}
\label{sec:num}
%%--------------------------------------------------------------------------

We present a set of numerical experiments to demonstrate the robustness of the proposed schemes.

\subsection{Convergence Rates}
\label{subsec:conv}
\begin{figure}[htbp]
\begin{center}
\subfigure[$L^1$ order of convergence: $L^1$-errors of the ion-density at time $t=2.0$ are plotted for 100, 200, 400 and 800 cells]{
\includegraphics[width=5.0in, height=2.5in]{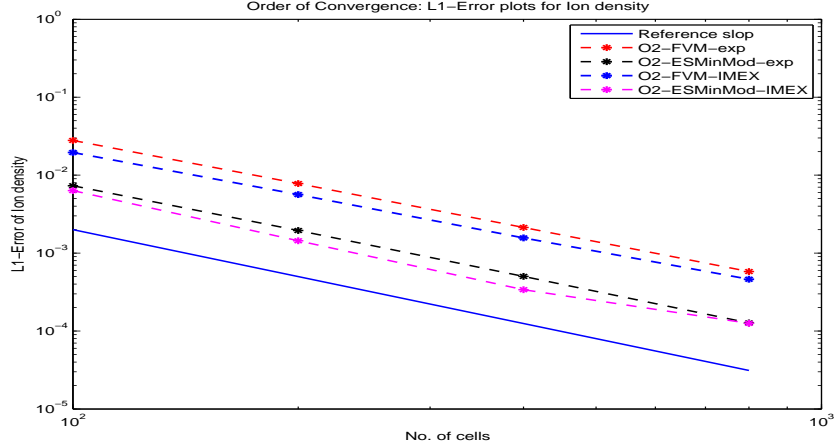}
\label{fig:l1_error}
}
\subfigure[Ion-density plots at time $t=2.0$ using 100 cells]{
\includegraphics[width=5.0in, height=2.5in]{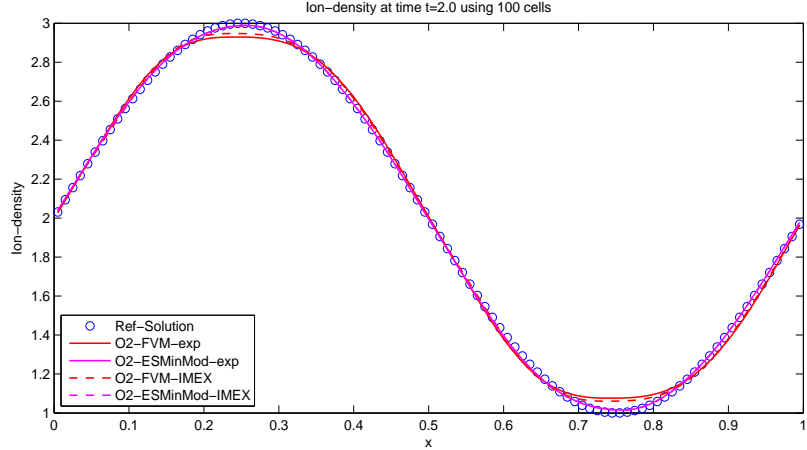}
\label{fig:exact_comp}
}
\caption{Errors of second order schemes}
\label{fig:error}
\end{center}
\end{figure}

As it is not possible to obtain explicit solution formulas for the two-fluid equations, we employ a forced solution approach to manufacture explicit solutions.  

In one space dimension, we consider the \emph{modified} equation:
$$
\pa_t\uc +\pa_x \f(\uc)=\sou(\uc) + K(x,t).
$$
with forcing term:
$$
K(x,t)=\{{\bf 0}_{13},-(2+\sin(2 \pi (x-t))),0,0,2+\sin(2 \pi (x-t)),0 \}^{\top}
$$ 
The initial densities are $\rhoi=\rhoe=2.0+\sin(2\pi x),$ with the velocities $v_i^x=v_e^x=1.0$ and the pressures $p_i=p_e=1.0.$ The initial magnetic field is $B^y=\sin(2\pi x)$ and the electric field is $E^z=-\sin(2\pi x).$ The computational domain is $(0,1)$ with periodic boundary conditions. The ion-electron mass ratio is set to $m_i/m_e=2.0$. 

It is straightforward to check that the exact solution is 
\begin{eqnarray*}
\rho_i=\rho_e=2.0+\sin(2 \pi (x-t)).
\end{eqnarray*}
In Figure \ref{fig:l1_error}, we have plotted the $L^1$ errors for the second-order schemes based on entropy stable fluxes with minmod (ES-MinMod) reconstruction for the spatial discretization and the second order SSP-RK scheme for time updated. For comparison, we have also plotted the results for the second-order FVM scheme based on a four wave HLL type solver with minmod limiter (O2-FVM). We observe that entropy stable schemes are significantly less diffusive than the standard FVM schemes. This is further verified by the solution plots in Figure \ref{fig:exact_comp}. The entropy stable version of the IMEX scheme is also less diffusive than its FVM counterpart. However, we observe that rate of convergence for the IMEX scheme falls when we refine the mesh. This is on account of splitting errors (in each RK2 sub-step) for the IMEX schemes.  
%%--------------------------------------------------------------------------
\subsection{Soliton Propagation in One Dimension}
\label{subsec:sol1d} 
Soliton propagation in two-fluid plasmas are simulated in \cite{hakim06,sol1,sol2,sol3}. It is shown that ion-acoustic solitons can form from an initial density hump.  In this section, we follow \cite{hakim06,sol2}, to simulate solitons in one space dimension. 

\begin{figure}[htbp]
\begin{center}
\subfigure[Ion-density evolution: Ion-density at non-dimensional $t=1,2,3,4,$ and $5$ for various schemes]{
\includegraphics[width=5.0in, height=2.5in]{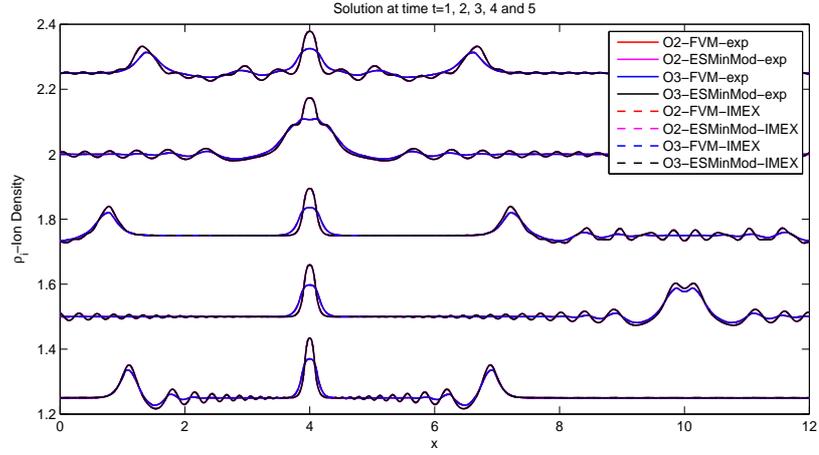}
\label{fig:s1_time}
}
\subfigure[Ion-density at non-dimensional time $t=5.0$ for various schemes]{
\includegraphics[width=5.0in, height=2.5in]{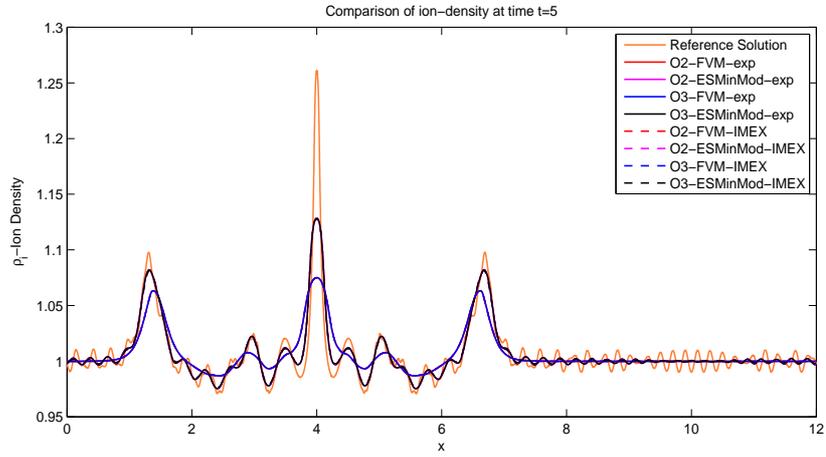}
\label{fig:s1_comp}
}
\subfigure[Ion-density at non-dimensional time $t=5.0$ for various schemes: Zoomed at $x=1.35$]{
\includegraphics[width=5.0in, height=2.5in]{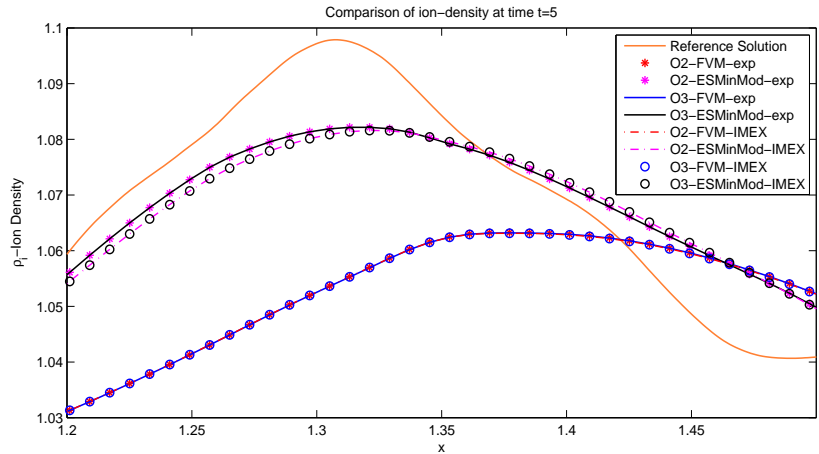}
\label{fig:s1_comp_l}
}
\caption{Soliton propagation using 1500 cells and Larmor radius $\rg=10^{-2}$.}
\label{fig:s1}
\end{center}
\end{figure}

\begin{figure}[htbp]
\begin{center}
\subfigure[Ion-density evolution: Ion-density at non-dimensional $t=1,2,3,4,$ and $5$ for various schemes]{
\includegraphics[width=5.0in, height=2.5in]{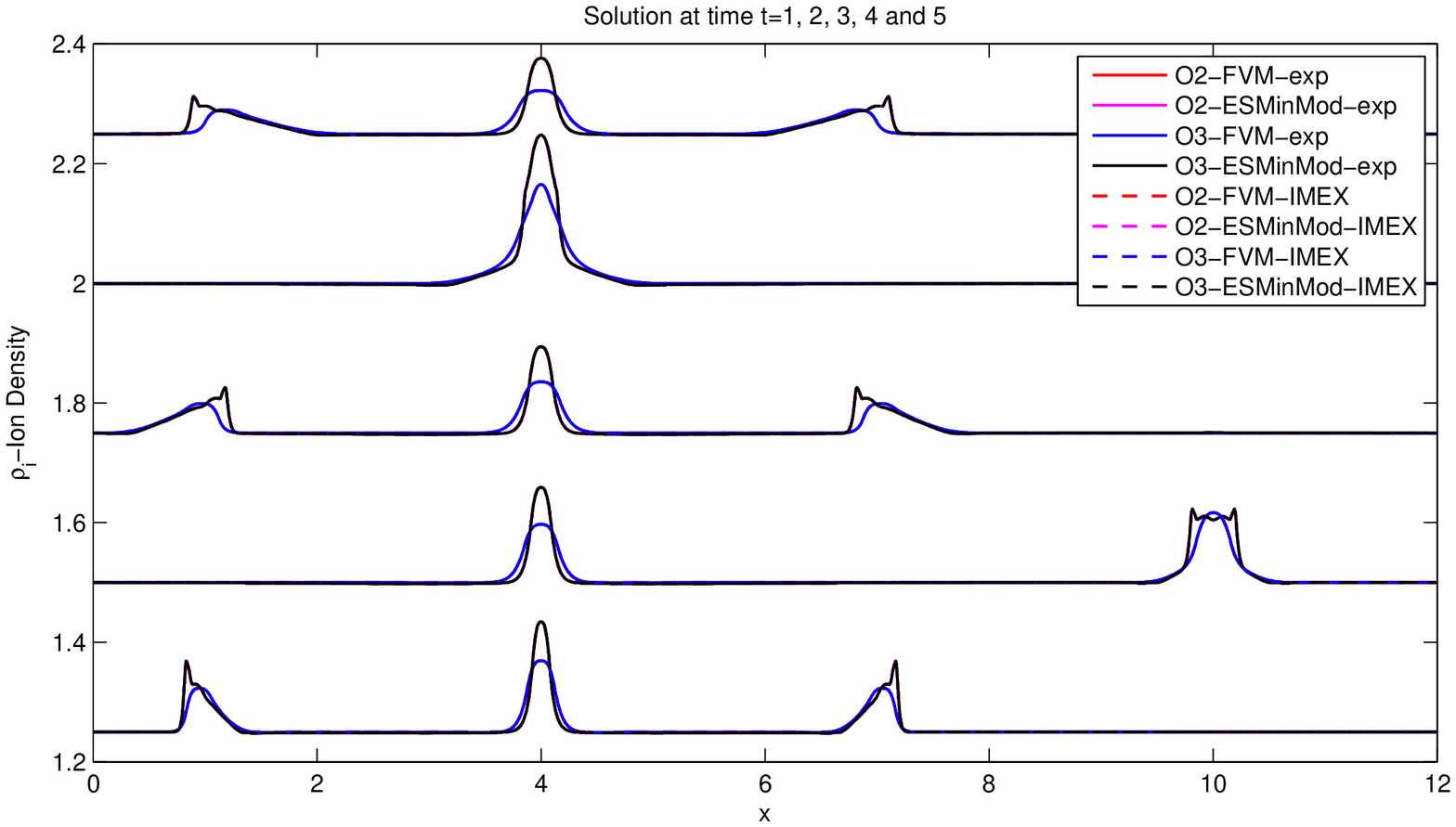}
\label{fig:s2_time}
}
\subfigure[Ion-density at non-dimensional time $t=5.0$ for various schemes]{
\includegraphics[width=5.0in, height=2.5in]{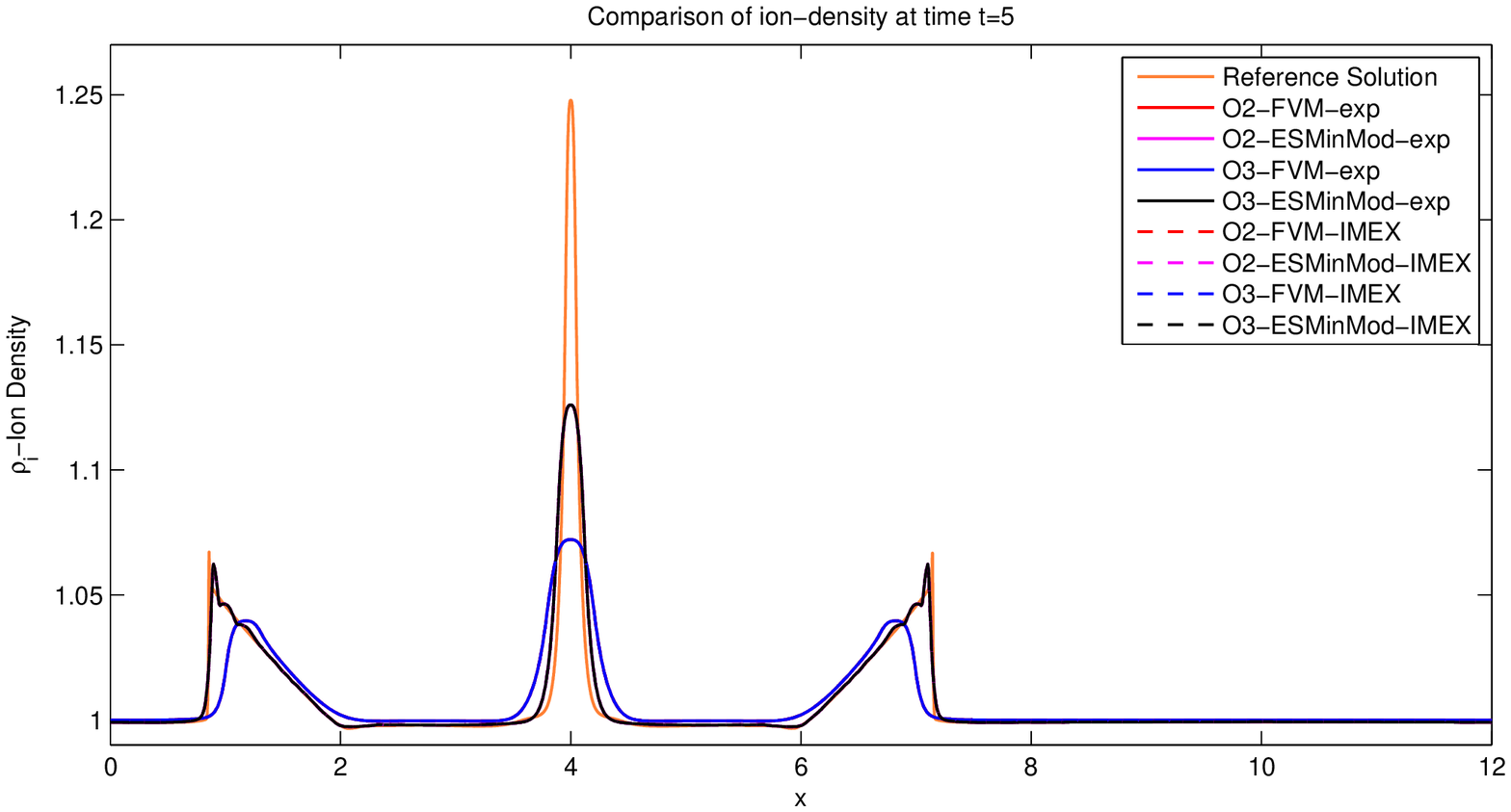}
\label{fig:s2_comp}
}
\caption{Soliton propagation using 1500 cells and Larmor radius $\rg=10^{-4}$.}
\label{fig:s2}
\end{center}
\end{figure}

\begin{figure}[htbp]
\begin{center}
\subfigure[Ion-density evolution: Ion-density at non-dimensional $t=1,2,3,4,$ and $5$ for various schemes]{
\includegraphics[width=5.0in, height=2.5in]{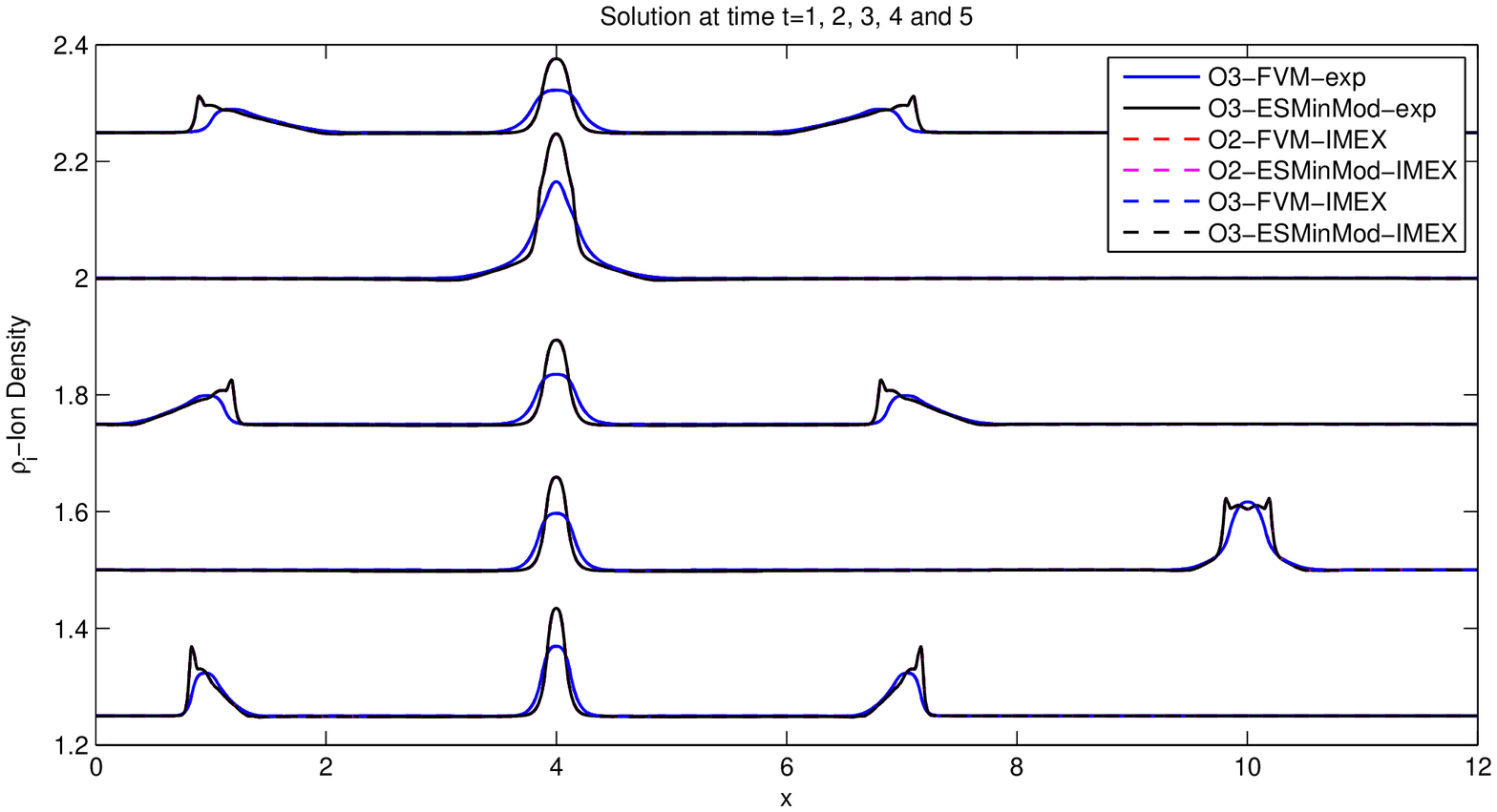}
\label{fig:s3_time}
}
\subfigure[Ion-density at non-dimensional time $t=5.0$ for various schemes]{
\includegraphics[width=5.0in, height=2.5in]{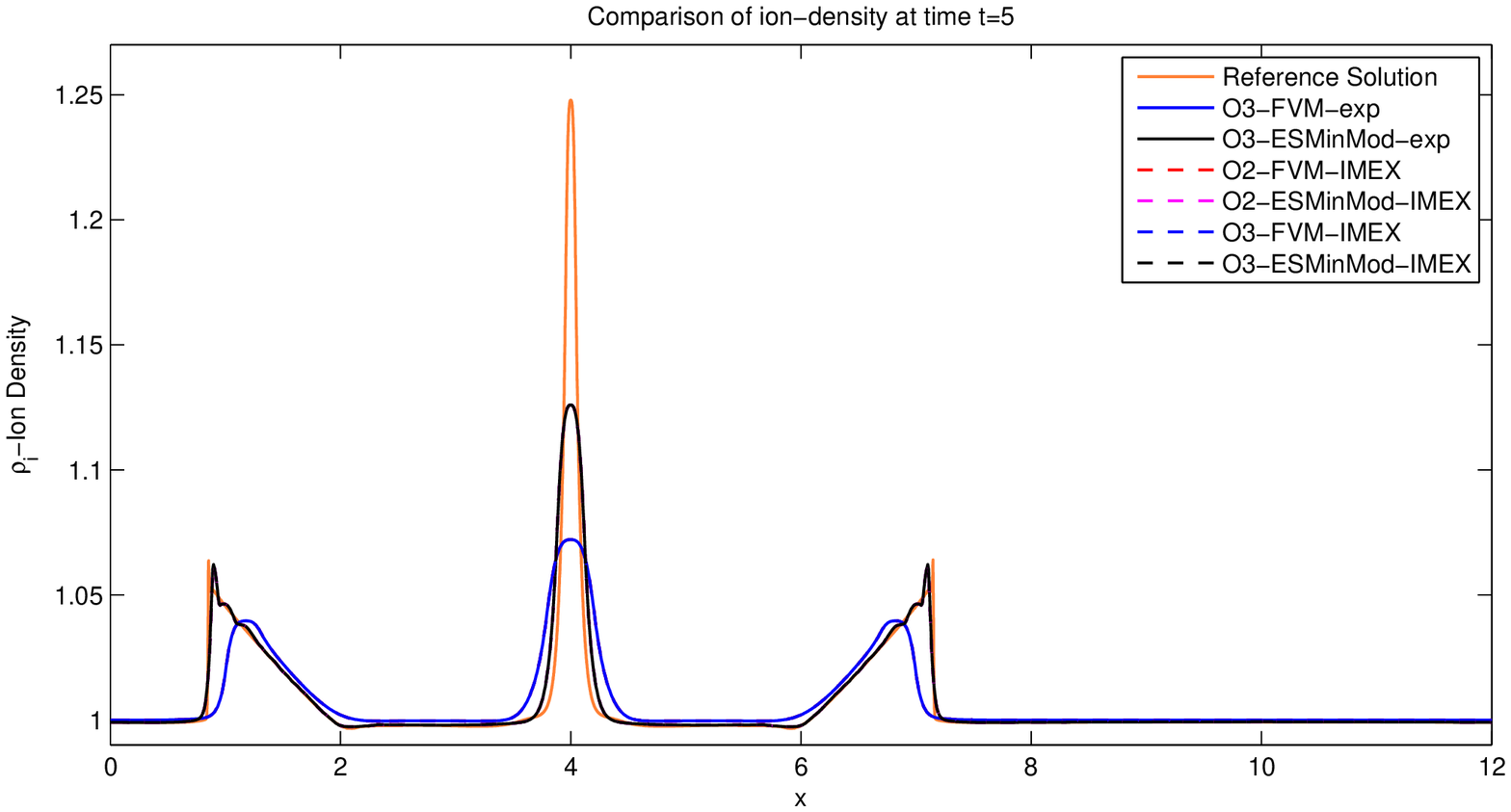}
\label{fig:s3_comp}
}
\caption{Soliton propagation using 1500 cells and Larmor radius $\rg=10^{-6}$.}
\label{fig:s3}
\end{center}
\end{figure}
Initially, the plasma is assumed to be stationary with ion density,
 \be
 \label{eq:sol}
 \rhoi = 1.0+ \exp(-25.0|x-L/3.0|),
\ee
and mass ratio $m_i/m_e=25,$, on the computational domain $D=(0,L)$ with $L=12.0$. Electron pressure is $p_e=5.0\rhoi$ with an ion-electron pressure ratio of $1/100$. Normalized Debye length is taken to be 1.0. Periodic boundary conditions are used. We consider three different Larmor radii: $10^{-2},\;10^{-4}$ and $10^{-6}$. Numerical solutions are computed using 1500 cells. The simulations are carried out using an MPI parallelized version of the code, on 10 computational cores.

The solutions are plotted for second order, spatially accurate entropy stable schemes (ESMN), using second (O2-ESMN) and third order (O3-ESMN) SSP Runge -Kutta, explicit and IMEX time stepping routines. We have also plotted the corresponding FVM solutions. The reference solutions for these simulations are computed using the O3-ESMN-IMEX scheme on 10000 mesh points. 

In Figure \ref{fig:s1}, we have plotted solutions corresponding to the Larmor radius of $10^{-2}.$ This corresponds to the simulation performed in \cite{hakim06}.  In Figure \ref{fig:s1_time}, we have plotted the ion-density profile at non-dimensional times $t=1,2,3,4$ and $5$. We observe that all the schemes are able to capture soliton waves.  In particular, the speed of soliton propagation is the same for all the schemes. In Figure. \ref{fig:s1_comp}, we have plotted the solutions at non-dimensional time $t=5.0$ and compared them with the reference solution. We again observe that the entropy stable schemes are more accurate than the corresponding FVM schemes. However it is hard to distinguish between some schemes in Figure. \ref{fig:s1_comp}, as solution lines for O2-FVM-exp, O3-FVM-exp, O2-FVM-IMEX and O3-FVM-IMEX coincide. Similarly, solution lines for O2-ESMN-exp, O3-ESMN-exp, O2-ESMN-IMEX and O3-ESMN-IMEX lie on top of each other in Figure. \ref{fig:s1_comp}. To, further analyze the schemes in Figure. \ref{fig:s1_comp_l}, we have zoomed in on the solution at $x=1.35$.  We notice that ESMN-IMEX schemes are slightly more diffusive than the ESMN-exp schemes. 

Compared to the solutions presented in \cite{hakim06}, entropy stable schemes appear to be more diffusive. However, in \cite{hakim06} authors have used a fourth order Runge-Kutta update for the source updates. Additionally, observe that both entropy stable schemes and wave propagation method fails to capture the oscillation around $x=10.0$ at the low resolution of 1500 cells. These oscillations are present in the solution only at finer resolutions. 

In Figures \ref{fig:s2} and \ref{fig:s3}, we have plotted the solutions for Larmor radii of $10^{-4}$ and $10^{-6}$, respectively. In Figures \ref{fig:s2_time} and \ref{fig:s3_time}, we have plotted the ion-density at non-dimensional times $t=1,2,3,4$ and $5$. As in the previous case, we observe that all schemes capture soliton waves. Furthermore, from the solution plots at non-dimensional time $t=5.0,$ in Figures \ref{fig:s2_comp} and \ref{fig:s3_comp}, we again note that the entropy stable schemes are less diffusive than FVM schemes. For the case of Larmor radius $10^{-6}$, we have not presented the solution for second order explicit time updates due to the very large simulation times, required for these schemes.
\begin{center}
\begin{table}[htbp]
\begin{tabular}{|c|c|c|c|}
\hline Scheme  & $\rg=10^{-2}$ & $\rg=10^{-4}$ & $\rg=10^{-6}$\\
%\hline O2-FVM-exp  & 18.46  & 1217.6 &-\\
\hline O2-ESMinMod-exp & 100.42 & 5089.67 &-\\
%\hline O3-FVM-exp & 27.77 & 111.88 & 12004.3\\
\hline O3-ESMinMod-exp & 152.26  & 533.85 & 74159.3 \\
%\hline O2-FVM-IMEX  & 20.34 & 19.44 & 23.09\\
\hline O2-ESMinMod-IMEX & 103.67 & 106.53 & 102.87\\
%\hline O3-FVM-IMEX & 29.73 & 33.64 & 28.74\\
\hline O3-ESMinMod-IMEX & 151.83 & 152.3 & 151.71 \\
\hline
\end{tabular} 
\caption{Comparison of simulation times of the numerical schemes for Larmor radii of $10^{-2},\;10^{-4}$ and $10^{-6}$ using 1500 cells.}
\label{tab:simu_time1d}
\end{table}
\end{center}

The above figures show that the IMEX schemes are slightly more diffusive than the explicit schemes for the same resolution and for the same spatial discretization. A natural question that arises in this context is why should be IMEX schemes be used when they only differ marginally in resolution with the explicit time stepping schemes ?. The answer to this lies in the computational run-time. 

From the source term for the two-fluid equations \eqref{eq:flux_source}, we see that the Larmor radius is a crucial parameter in determining the strength of the source term. Reducing the Larmor radius leads to an increase in the strength (and hence, stiffness) of the source term. Furthermore, the Larmor radius does not determine the speed of the waves in the two-fluid system. Hence, reducing the Larmor radius is a good test for evaluating the relative advantage of IMEX schemes over explicit time marching schemes.

To this end, we consider soliton propagation with different Larmor radii of $10^{-2},10^{-4}$ and $10^{-6}$, respectively. As the Larmor radius does not influence the wave speed, the time step for the IMEX schemes remains similar for the three simulations (with different Larmor radii). On the other hand, the increase in the strength of the source term, due to the decrease in the Larmor radius, implies a reduction in the time step for an explicit scheme. Therefore, we expect to see a difference in the computational cost between the implicit and explicit schemes. 

The simulation run-time for the three simulations (with different Larmor radii), on a mesh of $1500$ points, with all other simulation parameters being constant, are shown in Table \ref{tab:simu_time1d}. The table shows that the runtime for explicit schemes increases dramatically as the Larmor radius is reduced. The second-order scheme is particularly affected as the stability region for RK2 is quite small and it requires smaller time steps. In fact, for the Larmor radius of $10^{-4}$, the second-order (in time) explicit scheme is about $10$ times slower than the third-order (in time) explicit scheme. As a consequence, the run-time for the second-order explicit scheme on a Larmor-radius of $10^{-6}$ is too large and the run was not completed. The run-time for the third-order explicit scheme was also very large. On the other hand, the time taken by the implicit schemes (for both second- and third-order time stepping) is constant with respect to the Larmor radius. This implies a \emph{massive speed up} of the IMEX schemes (about a factor of $500$) when compared to the explicit schemes. This example illustrates the main advantage of the IMEX schemes: their robustness with respect to very low Larmor radii.

%--------------------------------------------------------------------------
\subsection{Generalized Brio-Wu Shock tube Problem}
\label{subsec:brio}
\begin{figure}[htbp]
\begin{center}
\subfigure[Generalized Brio-Wu shock tube problem: 1000 cells were used. Numerical solutions are plotted for $\rg=10.0$]{
\includegraphics[width=5.0in, height=2.5in]{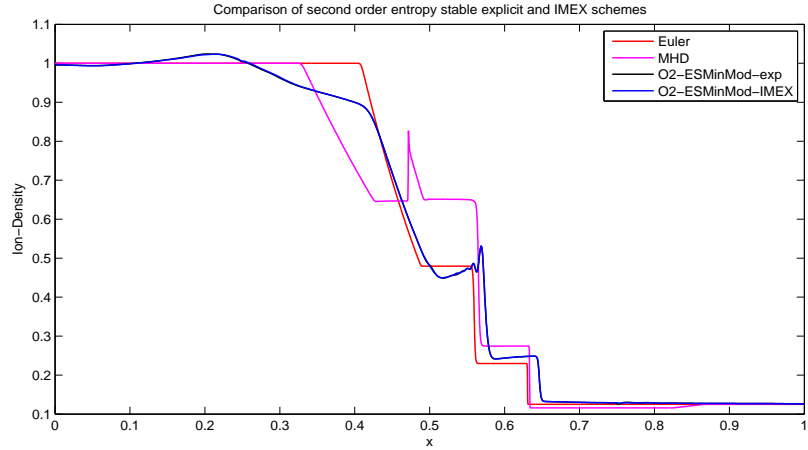}
\label{fig:tf1}
}
\subfigure[Generalized Brio-Wu shock tube problem: 50000 cells were used. Numerical solutions are plotted for second order schemes with $\rg=0.001$]{
\includegraphics[width=5.0in, height=2.5in]{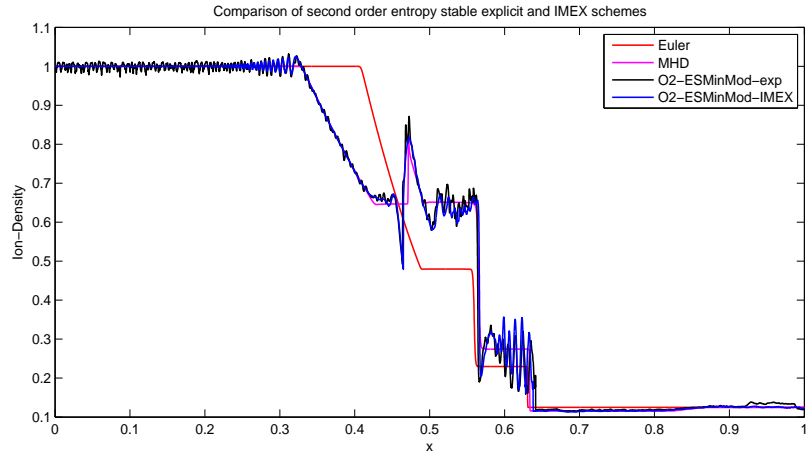}
\label{fig:tf4o2}
}
\subfigure[Generalized Brio-Wu shock tube problem: 50000 cells were used. Numerical solutions are plotted for second order schemes with $\rg=0.001$]{
\includegraphics[width=5.0in, height=2.5in]{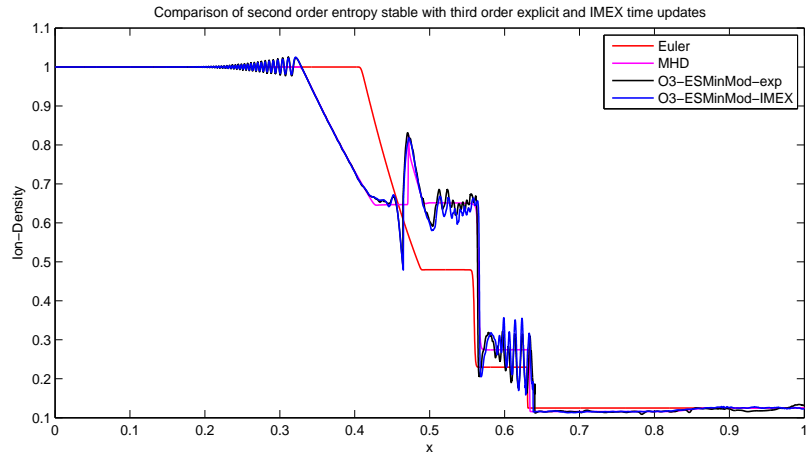}
\label{fig:tf4o3}
}
\caption{Generalized Brio-Wu shock tube Riemann problem}
\label{fig:tfbw}
\end{center}
\end{figure}
The initial conditions for this Riemann problem are
\begin{equation}
\label{eq:tfbw}
\U_{\mbox{left}}=\begin{cases}
    \rho_i = 1.0\\
  p_i =5 \;\times\;10^{-5}\\
  \rho_e = 1.0\;\; {m_e}/{m_i}\\
  p_e = 5 \;\times\;10^{-5}\\
  B^x = 0.75\\
  B^y = 1.0\\
  \vi=\ve=\E=0\\
  \phi=\psi=B^z=0
    \end{cases} \qquad
\U_{\mbox{right}}=\begin{cases}
  \rho_i = 0.125\\
  p_i =5 \;\times\;10^{-6}\\
  \rho_e = 0.125\;\; {m_e}/{m_i}\\
  p_e = 5 \;\times\;10^{-6}\\
  B^x = 0.75\\
  B^y = -1.0\\
  \vi=\ve=\E=0\\
  \phi=\psi=B^z=0
  \end{cases}
\end{equation}
on the computational domain $(0,1)$ with, $\U=\U_{\mbox{left}}$ for $x<0.5$ and $\U=\U_{\mbox{right}}$ for $x>0.5.$ The ion-electron mass ratio is taken to be ${m_i}/{m_e}=1836$. The initial conditions are non-dimensionalized using  $p_0=10^{-4}$. Non-dimensional Debye length is taken to be $0.01$. Simulations are carried out using Larmor radii of 10 and 0.001. Neumann boundary conditions are used. 

The purpose of this numerical experiment is to demonstrate the behavior of the solutions of two-fluid equations in two different regimes: one with high Larmor-radius and another with very low Larmor radius, respectively.

Numerical solutions are presented in Figure \ref{fig:tfbw}. In Figure \ref{fig:tf1}, we have plotted the numerical solutions based on O2-ESMinMod scheme using second order explicit and IMEX time updates. Solutions are computed with non-dimensional Larmor radius of 10.0, using 1000 cells. We observe that solution is very close to the solution of the Euler equations for each species. Note that letting $\hat{r}_g \rightarrow \infty$, one recovers the uncoupled equations of gas dynamics for both species. Furthermore, both IMEX and explicit schemes produce very similar results. 

The second regime that we investigate is to set the Larmor radius to $10^{-3}$. One expects to recover the MHD limit for vanishing Larmor radius. This limit is quite complicated to compute as one has to resolve the small-scale Langmuir oscillations, necessitating very fine meshes (see \cite{hakim06}). We show results obtained on a mesh of $50000$ cells both for second-order and third-order (in time) entropy stable (explicit as well as IMEX) schemes in figures \ref{fig:tf4o2} and \ref{fig:tf4o3}, respectively. 
 
We observe that the both explicit and IMEX solutions are converging to the MHD limit. However the second-order (in time) explicit scheme produces some small scale oscillations (near the left boundary). These small scale oscillations are not present in the results present in \cite{hakim06} as the source term in \cite{hakim06} is discretized using a fourth order Runge-Kutta update. On the other hand, the IMEX schemes resolves all the waves correctly. For the explicit schemes, small scale oscillations disappear when SSP-RK3 time update is used (see Figure \ref{fig:tf4o3}) and the results are comparable to those present in \cite{hakim06} in this case.
%%--------------------------------------------------------------------------
\subsection{Soliton Propagation in Two space dimensions}
\label{subsec:sol2d}

\begin{figure}[htbp]
\begin{center}
\subfigure[Ion-density evolution: Ion density $\rhoi$ at time $t=0.0,0.1,0.2$ and $0.3.$]{
\includegraphics[width=4.0in, height=2.5in]{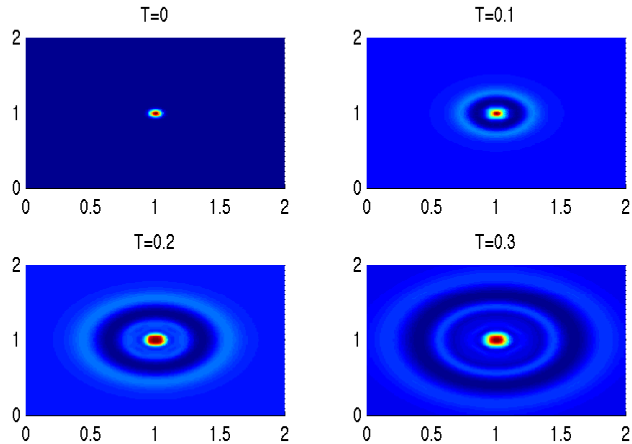}
\label{fig:s2d1_time}
}
\subfigure[Comparison of the schemes: Ion-density cut at $x=1$ for time $t=0.3$ of entropy stable schemes using third order explicit and IMEX time update.]{
\includegraphics[width=4.0in, height=2.5in]{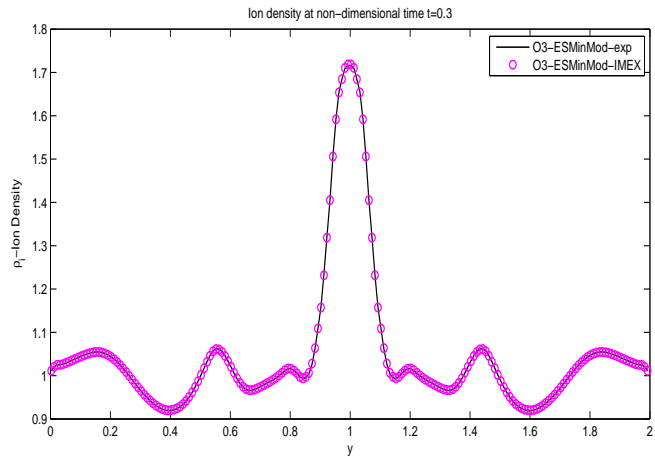}
\label{fig:s2d1_comp}
}
\caption{Soliton propagation in two dimensions on $200\times200$ mesh with $\rg=10^{-2}$.}
\label{fig:s2d1}
\end{center}
\end{figure}

\begin{figure}[htbp]
\begin{center}
\subfigure[Ion-density evolution: Ion density $\rhoi$ at time $t=0.0,0.1,0.2$ and $0.3.$]{
\includegraphics[width=4.0in, height=2.5in]{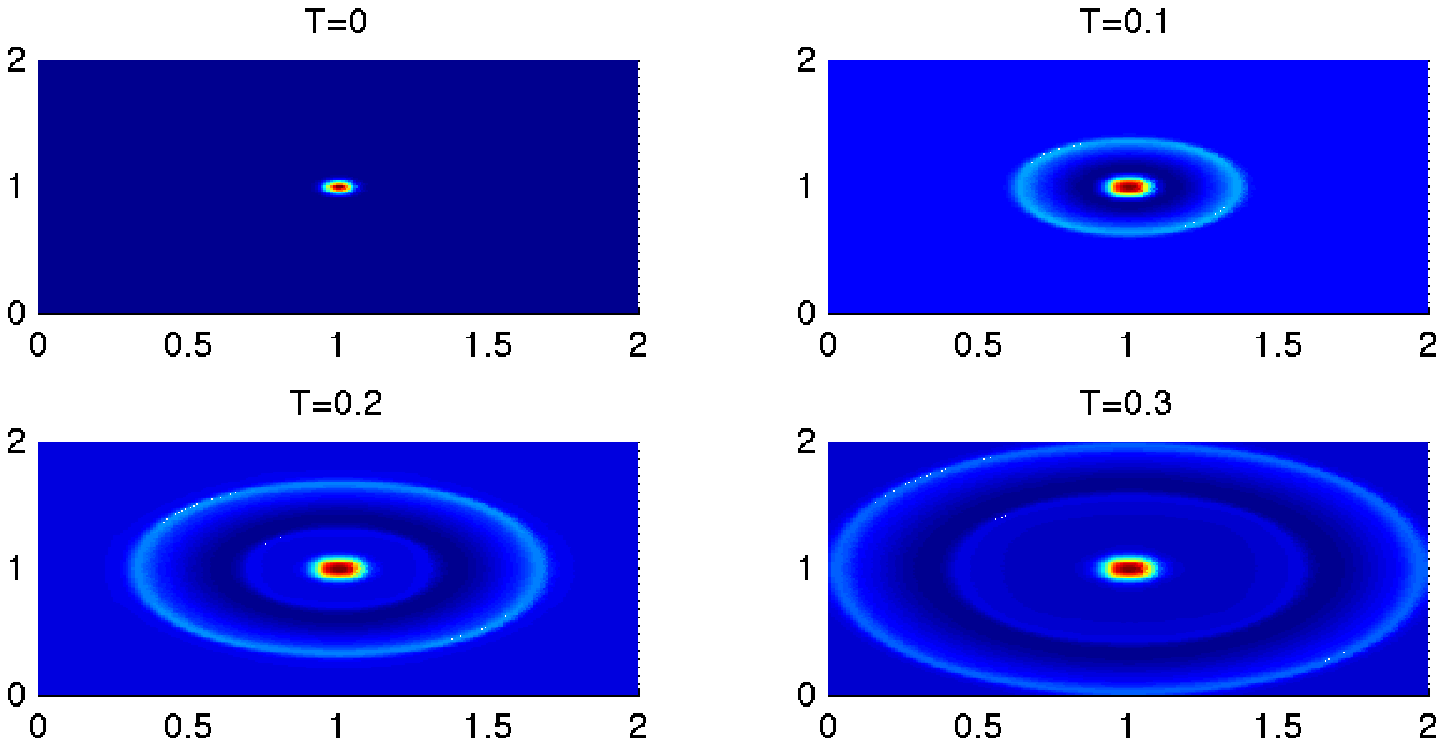}
\label{fig:s2d2_time}
}
\subfigure[Comparison of the schemes: Ion-density cut at $x=1$ for time $t=0.3$ of entropy stable schemes using third order explicit and IMEX time update.]{
\includegraphics[width=4.0in, height=2.5in]{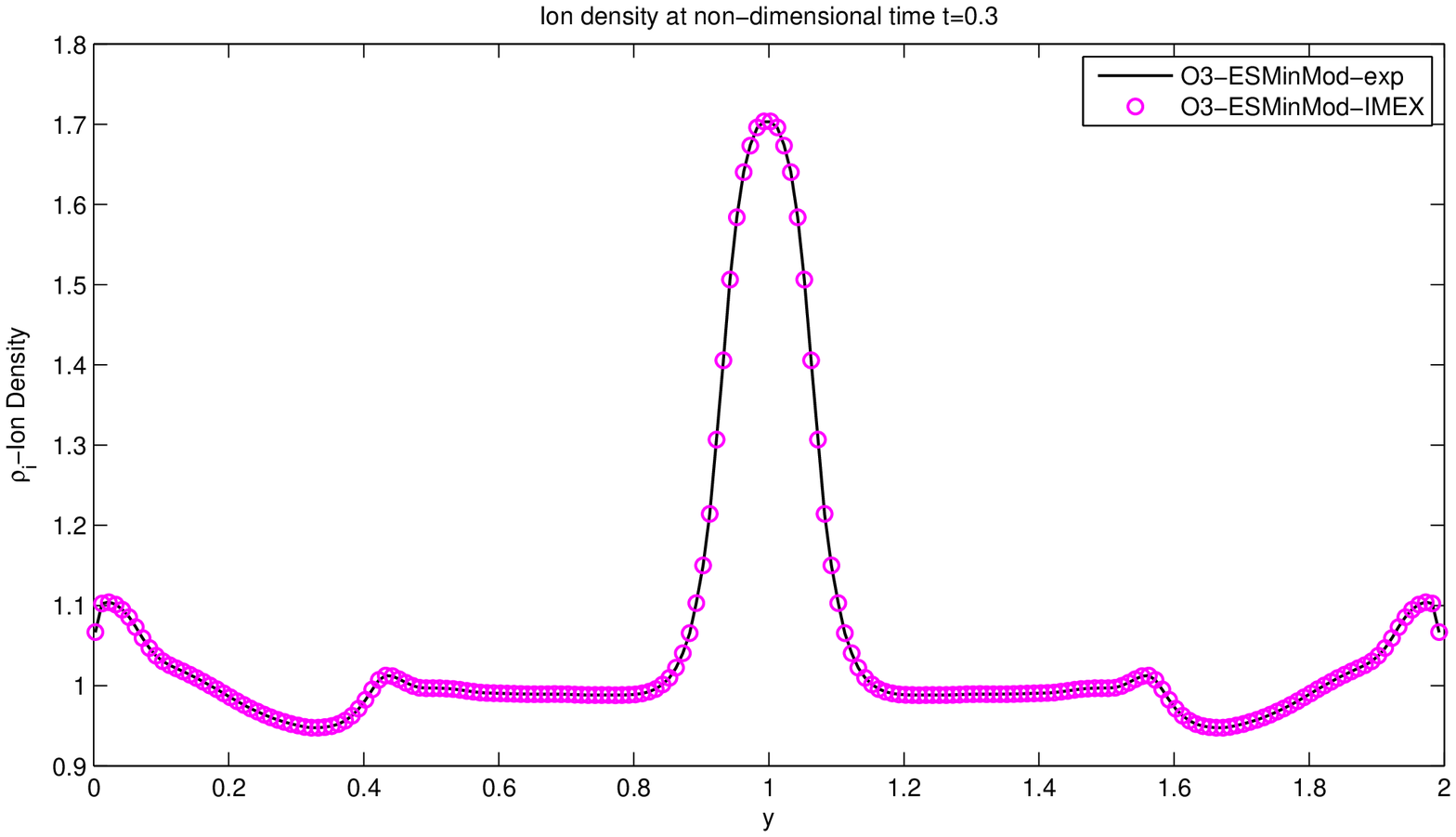}
\label{fig:s2d2_comp}
}
\caption{Soliton propagation in two dimensions on $200\times200$ mesh with $\rg=10^{-4}$.}
\label{fig:s2d2}
\end{center}
\end{figure}

Two dimensional soliton simulations were presented in \cite{sol3}. We follow \cite{sol3} and simulate 2-d solitons by considering the initial ion-density to be 
\be
\rhoi = 1.0+ 5.0\exp(-500.0( (x- L_x/2.0 )^2 + (y- L_y/2.0 )^2 ) 
\label{eq:sol_2d}
\ee
on the computational domain $(0,L_x)\times(0,L_y)$ with $L_x=L_y=2.0$. All other initial conditions are same as in the case of one dimensional soliton propagation in section \ref{subsec:sol1d}. Neumann boundary conditions are used to allow the waves to exit the domain without noticeable reflections. Note that we consider the ion-electron mass ratio of $25$ as compared to the ratio of $10$, considered in \cite{sol3}. Furthermore, we use Larmor radii of $10^{-2}$ and $10^{-4}$, compared to $10^{-1}$, used in \cite{sol3}. We expect dispersive waves moving outwards, similar to the one dimensional case, considered in section \ref{subsec:sol1d} (Also see\cite{sol3}).

\begin{center}
\begin{table}[htbp]
\begin{tabular}{|c|c|c|c|}
\hline Scheme  & $\rg=10^{-2}$ & $\rg=10^{-4}$\\
\hline O3-ESMinMod-exp & 907.2  & 2661.36  \\
\hline O3-ESMinMod-IMEX & 921.82 & 939.96  \\
\hline
\end{tabular} 
\caption{Comparison of simulation times of the numerical schemes for Larmor radii of $10^{-2}$ and $10^{-4}$, using $200\times 200$ cells.}
\label{tab:simu_time2d}
\end{table}\end{center}

Numerical results are presented in Figures \ref{fig:s2d1} and \ref{fig:s2d2}, corresponding to the Larmor radii of $10^{-2}$ and $10^{-4}$, respectively. In Figure \ref{fig:s2d1_time} and \ref{fig:s2d2_time} we have plotted the solution at non-dimensional time of $t=0,0.1,0.2$ and $0.3$ using O3-ESMN-IMEX scheme. The wave structure observed is similar to the one dimensional case. In Figure \ref{fig:s2d1_comp} and \ref{fig:s2d2_comp}, we have  plotted one dimensional cuts of the solution at  $x=1$ and at non-dimensional time $t=0.5$ for O3-ESMN-exp and O3-ESMN-IMEX schemes. As seen in the figures, the initial density hump bearks into a standing wave, centered at the origin, together with dispersive waves that propagate outward. We observe similar performances for both schemes. Furthermore, the IMEX scheme is faster than the explicit scheme for the low Larmor radius simulation. 
\section{Conclusion}
\label{sec:con}
We consider the two-fluid plasma equations and design finite difference schemes to approximate them. The semi-discrete version of the scheme is shown to be entropy stable. As the source terms in the two-fluid equations can be stiff, we propose IMEX schemes that treat the source terms implicitly. The novelty of our approach, in this context, is to observe that the special structure of the two-fluid plasma equations allows us to write the implicit (in source) time update as a local (in each cell) linear system of equations. This system can be solved exactly. Hence, our IMEX scheme does not require any Newton iterations or linearizations. 

Both the explicit and IMEX entropy stable schemes are shown to perform robustly on a set of numerical experiments. The entropy stable schemes are clearly more accurate than standard HLL type finite volume schemes. The main advantage of the IMEX schemes lie in the fact that they are robust (in run-time) with respect to a decrease in the Larmor radius. In particular, on (realistic) low Larmor radii simulations, the IMEX schemes can gain orders of magnitude in speedup as compared to the explicit schemes.

We will extend the entropy stable schemes to even higher order of accuracy and couple them with adaptive mesh refinement to be able to simulate realistic two-dimensional examples like magnetic reconnection, in a forthcoming paper.
%%-------------------------------------------------------

\end{document}